\documentclass[runningheads]{llncs}
\usepackage{appendix}
\usepackage[utf8]{inputenc}
\usepackage[T1]{fontenc}
\usepackage{microtype}
\usepackage{mathtools}
\usepackage{amssymb}
\usepackage{booktabs}
\usepackage{array}
\usepackage{tabularx}
\usepackage{longtable}
\newcolumntype{L}{>{\raggedright\arraybackslash}X} 
\usepackage{enumitem}
\usepackage{csquotes}
\usepackage[scaled=0.9]{zi4}
\usepackage{xcolor}
\usepackage{tikz,graphicx}
\usepackage{pgfplots}
\pgfplotsset{compat=1.18}
\usetikzlibrary{arrows.meta,positioning}
\usepackage{varioref}
\usepackage[colorlinks=true,linkcolor=blue!70!black,citecolor=green!50!black,urlcolor=purple!70!black]{hyperref}
\usepackage{cleveref}
\usepackage[backend=biber,style=numeric-comp,sorting=nyt,backref=true]{biblatex}
\newcommand{\Q}{\mathbb{Q}}
\newcommand{\Z}{\mathbb{Z}}
\newcommand{\R}{\mathbb{R}}

\newcommand{\Gal}{\operatorname{Gal}}
\newcommand{\Frob}{\operatorname{Frob}}

\newcommand{\Res}{\operatorname{Res}}

\newcommand{\todo}[1]{}

\begin{document}

\title{On the Algebraic Complexity of Optimal\\Polynomial Approximation Constants}
\titlerunning{Algebraic Complexity of Optimal Polynomial Approximation Constants}
\authorrunning{Filipović et al.}

\author{Filip Filipović\inst{1,2} \and Rémi Géraud-Stewart\inst{3}\thanks{Work done in 2016 while at ENS.} \and\\
David Naccache\inst{1,3} \and Aleksa Veličković\inst{3,4} }
\institute{
Fakultet inženjerskih nauka, Univerziteta u Kragujevcu\\
6 Sestre Janjić, 34000 Kragujevac, Serbia\\
\email{\url{david@kg.ac.rs}}
\and
BioIRC\\
Prvoslava Stojanovića 6, 34000 Kragujevac, Serbia\\
\email{\url{filip.filipovic@uni.kg.ac.rs}}
\and
  DIENS, ENS, CNRS, PSL University\\
  45 rue d'Ulm, 75230, Paris \textsc{cedex} 05, France\\
  \email{\url{given\_name.family\_name@ens.fr}}
  \and
Be-YS Research\\
10 Boulevard Haussmann, 75009, Paris, France\\
  \email{\url{aleksa.velickovic@be-ys.com}}
}

\maketitle

\begin{abstract}
We investigate the algebraic nature of constants arising from Chebyshev equiripple (minimax) polynomial approximation of $L_p$ norms on $[0,1]$. For the Euclidean case $\sqrt{1+t^2}$, we compute equiripple solutions from degree~1 through~8 and determine exact minimal polynomials and Galois groups for degrees~1 and~2 in both absolute and relative error formulations. We find a sharp phase transition: the degree-1 constants are solvable by radicals (Galois groups $C_4$, $D_4$), while the degree-2 constants provably are not (Galois groups $S_{12}$, $S_{10}\times C_2$). We explain this transition by a structural dichotomy between decoupling and coupling of critical points, and extend the analysis to $L_3$ norms, where the minimal polynomial degree jumps to~246. A general impossibility result follows from Hilbert's irreducibility theorem. We also develop a theory of piecewise equiripple approximation with jointly optimized breakpoints, proving that each doubling of the number of subintervals gains $n+1$ bits of accuracy at no additional arithmetic cost. These results establish a previously unobserved connection between Chebyshev approximation theory and the non-solvability of algebraic equations.
\end{abstract}

\section{Introduction}\label{sec:intro}

\subsection{The approximation problem}
Computing $\sqrt{x^2+y^2}$ is expensive: it requires a square root, which in hardware or embedded software costs significantly more than additions and multiplications. A classical trick, known as the \emph{alpha-max-plus-beta-min} algorithm, replaces the exact norm by the linear combination
\[
  g(x,y) = \alpha\cdot\max(|x|,|y|) + \beta\cdot\min(|x|,|y|)\,,
\]
which requires only a comparison and two multiplications by constants. This folk algorithm has been in use in digital signal processing since at least the early 1990s~\cite{paeth1990,frerking1994}, was published in textbooks~\cite{lyons2011}, patented~\cite{patent1995}, and its optimal coefficients tabulated to 12~decimal places~\cite{griffin1999}.

The natural mathematical question is: \emph{what are the best possible constants $\alpha$ and $\beta$?} More precisely, assuming $x\ge y\ge 0$ and setting $t=y/x\in[0,1]$, we seek the polynomial $h(t)$ of degree~$n$ that minimizes the maximum error $\|h - f\|_\infty$ on $[0,1]$, where $f(t)=\sqrt{1+t^2}$.

By the classical theorem of Chebyshev (1854), such an optimal polynomial exists, is unique, and is characterized by the \emph{equioscillation} (or \emph{equiripple}) property: the error $d(t)=h(t)-f(t)$ attains its maximum absolute value with alternating signs at exactly $n+2$ points in $[0,1]$. We take these as our starting point.

\paragraph{The degree-1 case admits a closed form.}
For $n=1$ (linear approximation), the optimal coefficients have been known numerically since the 1990s: $\alpha\approx 0.9604$, $\beta\approx 0.3978$, achieving a maximum relative error of about~3.96\% (equivalently, a maximum absolute error of~$4.49\times 10^{-2}$). A trigonometric closed form $\alpha = 2\cos(\pi/8)/(1+\cos(\pi/8))$ appears on Wikipedia~\cite{wikipedia_amabm} (added in 2006 without citation or derivation); see also the survey by Celebi et al.~\cite{celebi2011}. We show in Section~\ref{sec:deg1} that $\alpha$ satisfies the polynomial $x^4+24x^3-8x^2-32x+16$ with Galois group~$C_4$, which is expressible by nested radicals. The derivation is straightforward because the equiripple system \emph{decouples}: the location of the critical point can be determined independently of the error level.

\paragraph{A degree-2 improvement.}
A simple improvement adds one rational correction term:
\begin{equation}\label{eq:rational_correction}
  h(x,y) = ax + by + c\,\frac{y(x-y)}{x}\,,
\end{equation}
which, after the substitution $t=y/x$, becomes a degree-2 polynomial in~$t$. At the cost of one extra multiplication and one division, the error drops tenfold, from~4\% to~0.4\%. We ask: what is the exact algebraic nature of the optimal constants $a$, $b$, $c$?

The answer can be worked out explicitly:
\begin{align*}
a &= \frac{1+s-k \left(b+ c (1-k) \right)}{2} \\ 
b &= \frac{k^3 + (2k-1)\left(1 - s \sqrt{2}\right)}{(k-1)^2 s} \\
c &= \frac{1+k-s\sqrt{2}}{(k-1)^2 s}
\end{align*}
where $s=\sqrt{k^2+1}$, and $k$ is some constant 
\[
k = 0.25924696053627332706027906015867463059...
\]
whose value we will discuss further below.
The improvement over degree-1 approximation is illustrated in \Cref{fig:err}.
Note that once the constants $a, b, c$ are computed, evaluating \Cref{eq:rational_correction} requires only 4 multiplications and one division, which don't have cross-dependencies and can thus be executed concurrently.

\begin{figure}[h]
    \centering
    \includegraphics[width=0.5\textwidth]{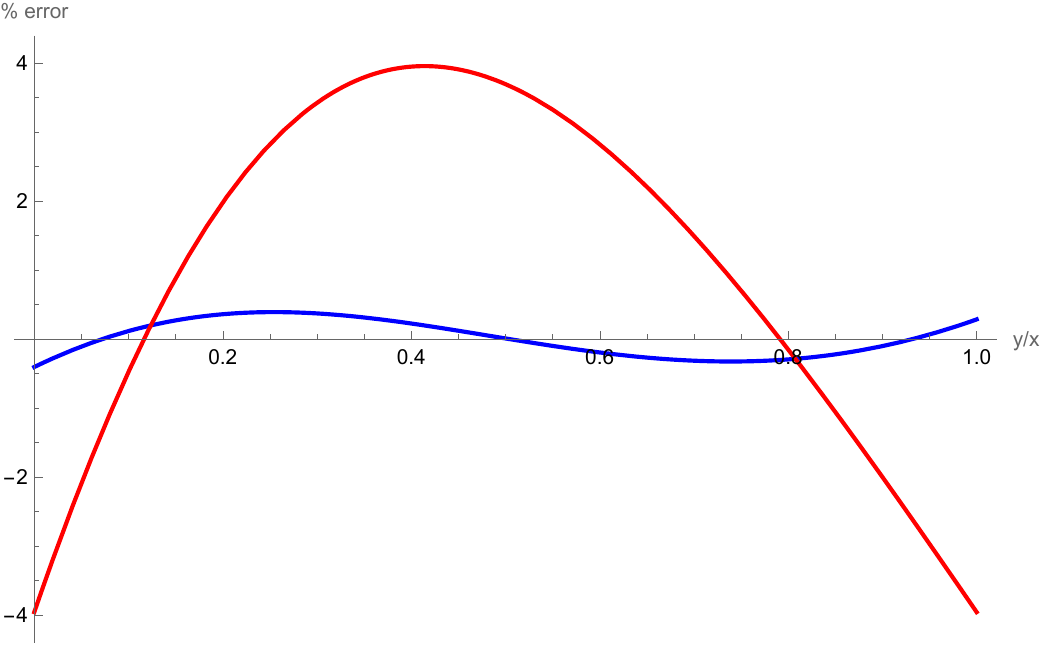}
    \caption{The relative error for degree-1 approximation (\textcolor{red}{$g(x,y)$}) versus degree-2 approximation (\textcolor{blue}{$h(x,y)$}), in terms of $t = y/x$.}
    \label{fig:err}
\end{figure}

The optimal parameter $k$ is in fact the first interior equiripple point. We show it is algebraic of degree~20, with Galois group $S_{10}\times C_2$ of order~$7{,}257{,}600$. Since $S_{10}$ is not a solvable group (it contains the simple group~$A_5$), the parameter $k$ has no closed-form expression in terms of elementary operations.
Note the sharp contrast with the degree-1 case, where the Galois group has order~4 and we have a closed form formula for optimal parameters.

\subsection{Overview}
We prove this result in two different formulations (absolute and relative error), extend to $L_p$ norms where the algebraic complexity grows rapidly (degree~246 for $L_3$), establish a general impossibility theorem via Hilbert's irreducibility theorem, and explain the structural mechanism (coupling of critical points) that drives the transition. We also investigate piecewise approximation with jointly optimized breakpoints, showing that subdivision gains $n+1$ bits per doubling of subintervals at zero arithmetic cost. Numerical data for equiripple solutions from degree~1 through~8 completes the picture.

\medskip
\noindent Our main contributions are:
\begin{enumerate}
  \item Two explicit irreducible polynomials (of degrees~20 and~12) whose roots are the degree-2 equiripple constants for $\sqrt{1+t^2}$ in the absolute and relative formulations respectively, along with proofs that these polynomials have Galois groups $S_{10}\times C_2$ and $S_{12}$ (using Jordan's theorem applied to Frobenius elements at the primes $p=17$ and $p=257$, see \Cref{sec:main}).
  \item A structural explanation for why the degree-1 constants are solvable (the equiripple system decouples) while the degree-2 constants are not (the system is irreducibly coupled) (\Cref{sec:transition}).
  \item A computation showing that the analogous constant for the $L_3$ norm has minimal polynomial of degree~246, indicating rapid growth of algebraic complexity with the norm parameter (\Cref{sec:lp}).
  \item A proof, via Hilbert's irreducibility theorem, that the non-solvability phenomenon is generic: it holds for almost all target functions in a one-parameter family (\Cref{sec:general}).
  \item An analysis of the arithmetic structure of the degree-20 polynomial via its Newton polygons at the primes~2 and~7 (\Cref{sec:arithmetic}).
  \item Numerical equiripple solutions for polynomial degrees~1 through~8, with an analysis of the asymptotic efficiency governed by the Bernstein ellipse parameter $\rho\approx 4.612$ (\Cref{sec:efficiency}).
  \item A theory of piecewise equiripple approximation with jointly optimized breakpoints: we prove that each doubling of subintervals gains $n+1$ bits of accuracy at zero additional arithmetic cost, and show that degree-1 linear approximation with $k=4$ subintervals already surpasses the single-interval degree-2 result (\Cref{sec:piecewise}).
\end{enumerate}

\section{Solvability of degree-1 approximation}\label{sec:deg1}

This section recovers the degree-1 formula in the mathematical scaffold that we use throughout this work. 

\subsection{Relative error optimization}
We seek $h(t)=\alpha+\beta t$ minimizing $\max_{t\in[0,1]}|e(t)|$ where $e(t) = h(t)/\sqrt{1+t^2}-1$ is the relative error. By Chebyshev's theorem, the optimal approximation equioscillates at $n+2=3$ points. Let these be $t=0$, $t=t_1$ (interior), and $t=1$.

The critical point equation $e'(t_1)=0$ reduces (after clearing denominators) to $\beta - \alpha t_1 = 0$, giving
$
  t_1 = \beta/\alpha
$.
Crucially, \emph{the critical point depends only on the ratio of coefficients}.

The equiripple condition $e(0) = e(1)$ gives $\alpha - 1 = (\alpha+\beta)/\sqrt{2} - 1$, i.e., $\alpha = (\alpha+\beta)/\sqrt{2}$, hence $\beta = \alpha(\sqrt{2}-1)$. Substituting we get
$
  t_1 = \sqrt{2}-1 \approx 0.4142
$.
Note that this value is \emph{independent of the error level}~$\varepsilon$. In other words, the system has \enquote{decoupled}: we have determined $t_1$ without knowing $\alpha$.

To find $\alpha$, we use the remaining condition $e(t_1)=-e(0)$ (alternating sign). After squaring to eliminate $\sqrt{1+t_1^2}$, then eliminating $\sqrt{2}$, one obtains:

\begin{proposition}\label{prop:deg1}
The optimal coefficient $\alpha$ satisfies the irreducible polynomial
\[
  P_1(x) = x^4+24x^3-8x^2-32x+16 \in\Q[x]
\]
with Galois group~$C_4$ (cyclic of order~4), discriminant~$2^{29}$, and roots
\[
  \alpha = \frac{2\cos(\pi/8)}{1+\cos(\pi/8)} = \frac{2}{1+\sqrt{4-2\sqrt{2}}} \approx 0.9604\,.
\]
\end{proposition}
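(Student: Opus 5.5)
The plan is to work backwards from the explicit value. I would first obtain a closed form for $\alpha$ from the equiripple conditions already derived, then find the minimal polynomial of an auxiliary quantity $u$, and finally transport every claim about $P_1$ through a Möbius substitution.

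\textbf{Closed form.} From $\beta=\alpha t_1$ we get $\alpha+\beta t_1=\alpha(1+t_1^2)$. Hence $e(t_1)=\alpha\sqrt{1+t_1^2}-1$, and the condition $e(t_1)=-e(0)$ becomes
\[
\alpha\sqrt{1+t_1^2}=2-\alpha .
\]
With $t_1=\sqrt2-1$ we have $1+t_1^2=4-2\sqrt2$, so $\alpha=2/(1+u)$ where $u=\sqrt{4-2\sqrt2}$. For the trigonometric form, use $\cos(\pi/8)=\tfrac12\sqrt{2+\sqrt2}$. Then
\[
1/\cos(\pi/8)=\sqrt{4/(2+\sqrt2)}=\sqrt{4-2\sqrt2}=u ,
\]
and therefore $2\cos(\pi/8)/(1+\cos(\pi/8))=2/(1+u)$. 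Numerically this gives $\approx 0.9604$.

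\textbf{Polynomial and irreducibility.} The number $u$ satisfies $(u^2-4)^2=8$, that is,
\[
g(u)=u^4-8u^2+8 .
\]
This is Eisenstein at $2$, so $g$ is irreducible over $\Q$. Inverting the relation gives $u=(2-x)/x$. I would then check the identity
\[
x^4 g\bigl((2-x)/x\bigr)=(2-x)^4-8x^2(2-x)^2+8x^4=x^4+24x^3-8x^2-32x+16=P_1(x),
\]
which is a routine expansion. Because a Möbius substitution maps roots bijectively and preserves degree, $P_1$ is irreducible. It is the minimal polynomial of $\alpha$, and its roots are exactly $2/(1+u_i)$, where $u_i$ runs over $\pm\sqrt{4\pm2\sqrt2}$.

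\textbf{Galois group.} The field $\Q(\alpha)=\Q(u)$ contains $\sqrt2=(4-u^2)/2$. It also contains the conjugate $u'=\sqrt{4+2\sqrt2}$, because $uu'=\sqrt{16-8}=2\sqrt2$ gives $u'=2\sqrt2/u$. So $\Q(u)$ is normal of degree $4$ and the Galois group has order $4$. Consider the automorphism $\sigma$ with $u\mapsto u'$. It sends $\sqrt2\mapsto-\sqrt2$, so
\[
\sigma^2(u)=\sigma(2\sqrt2/u)=-2\sqrt2/u'=-u\neq u .
\]
Thus $\sigma$ has order $4$ and the group is $C_4$.

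\textbf{Discriminant.} For a biquadratic $x^4+ax^2+b$ the discriminant is $16b(a^2-4b)^2$. With $a=-8$, $b=8$ this gives $\operatorname{disc}(g)=16\cdot 8\cdot 32^2=2^{17}$.

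The roots of $P_1$ are $x_i=2/(1+u_i)$, so
\[
x_i-x_j=\frac{2(u_j-u_i)}{(1+u_i)(1+u_j)} .
\]
Taking the product over all pairs, and using $\prod_i(1+u_i)=g(-1)=1$ together with the fact that $P_1$ is monic, gives
\[
\operatorname{disc}(P_1)=2^{12}\operatorname{disc}(g)=2^{29}.
\]

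The only step needing care is this discriminant bookkeeping: tracking the powers of $2$ and the normalising factor $\prod_i(1+u_i)^{6}$. It is simplified by the fortunate value $g(-1)=1$. Everything else is mechanical verification.
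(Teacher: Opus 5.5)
Your irreducibility step is wrong as stated. The polynomial $g(u)=u^4-8u^2+8$ is \emph{not} Eisenstein at $2$, because its constant term $8$ is divisible by $2^2$. Since $2$ is the only prime dividing the lower coefficients, Eisenstein fails at every prime. This is not cosmetic: irreducibility of $P_1$, the equality $[\Q(u):\Q]=4$, the order-$4$ count for the Galois group, and the existence of $\sigma\colon u\mapsto u'$ all rest on it.

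The repair is short and uses your own transport idea. Since $y^4g(2/y)=8(y^4-4y^2+2)$ and $g(0)\neq 0$, irreducibility of $g$ is equivalent to irreducibility of $y^4-4y^2+2$. That is the minimal polynomial of $2\cos(\pi/8)=2/u$, and it genuinely is Eisenstein at $2$. Two other fixes also work:
\begin{itemize}
\item $\Q(u)\supseteq\Q(\sqrt2)$, and $u\notin\Q(\sqrt2)$ because $N_{\Q(\sqrt2)/\Q}(4-2\sqrt2)=8$ is not a rational square.
\item The $2$-adic Newton polygon of $g$ is the single segment from $(0,3)$ to $(4,0)$. Its slope $-3/4$ has denominator equal to the segment length, which forces irreducibility over $\Q_2$.
\end{itemize}

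With that fixed, the rest of your argument checks out: the closed form, the identity $x^4g((2-x)/x)=P_1(x)$, the automorphism of order $4$, $\operatorname{disc}(g)=2^{17}$, and the transport $\operatorname{disc}(P_1)=2^{12}\cdot 2^{17}$ using $g(-1)=1$. The paper takes a different route. It substitutes $\alpha=2/(1+w)$ and squares to get $8\alpha^4=(3\alpha^2+4\alpha-4)^2$, then delegates irreducibility and the Galois group to PARI/GP, and gives no argument for the discriminant. Your passage through the biquadratic $g$ yields a fully hand-checkable proof of every clause of the proposition once the Eisenstein claim is replaced.
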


\begin{proof}
From $e(t_1)=-(\alpha-1)$ and $t_1=\sqrt{2}-1$, we derive $\alpha = 2/(1+\sqrt{4-2\sqrt{2}})$. Setting $w=\sqrt{4-2\sqrt{2}}$, so $w^2=4-2\sqrt{2}$ and $\sqrt{2}=(4-w^2)/2$, we substitute $\alpha = 2/(1+w)$ and square to eliminate~$\sqrt{2}$, obtaining $8\alpha^4 = (3\alpha^2+4\alpha-4)^2$, which expands to $P_1(\alpha)=0$. 

Irreducibility and the Galois group $C_4$ can be verified by a variety of methods at this scale, we rely on PARI/GP (\texttt{polisirreducible} and \texttt{polgalois}).
\end{proof}

\subsection{Absolute equiripple optimization}
For the absolute error formulation, which minimizes $\max_{t\in[0,1]}|h(t)-\sqrt{1+t^2}|$, a similar derivation gives the critical point $t_1=\sqrt{(\sqrt{2}-1)/2}$. This quantity satisfies the irreducible polynomial $4x^4+4x^2-1=0$, whose Galois group is~$D_4$, the dihedral group of order~8. The result is again solvable by radicals.

\subsection{Why the degree-1 system is solvable}
In both formulations, the degree-1 system decouples for the following reasons:
\begin{enumerate}
  \item The derivative condition $e'(t)=0$ determines $t_1$ as a ratio of the polynomial coefficients, with no dependence on the error level.
  \item The endpoint equiripple condition then pins this ratio to a specific algebraic number (namely $\sqrt{2}-1$ in the relative case).
  \item Only after $t_1$ has been determined does one solve for the absolute scale of the coefficients, which involves a single quadratic equation over $\Q(\sqrt{2})$.
\end{enumerate}
The total algebraic complexity is degree~4. 

\section{Non-solvability of degree-2 approximation}\label{sec:main}

\subsection{The equiripple system}
The degree-2 polynomial $h(t) = a+(b+c)t-ct^2$ approximates $f(t)=\sqrt{1+t^2}$ on $[0,1]$. For the absolute equiripple, the error $d(t)=h(t)-f(t)$ alternates between $+\varepsilon$ and $-\varepsilon$ at four points $\{0,t_1,t_2,1\}$. Figure~\ref{fig:equiripple} shows this characteristic oscillation pattern.

\begin{figure}[tbp]
\centering
\begin{tikzpicture}
\begin{axis}[
  width=\linewidth, height=4.5cm,
  xlabel={$t$},
  ylabel={$d(t)\times 10^3$},
  xmin=-0.02, xmax=1.02,
  ymin=-5.5, ymax=5.5,
  xtick={0, 0.2592, 0.7489, 1},
  xticklabels={$0$, $t_1\!=\!k$, $t_2$, $1$},
  ytick={-4.04, 0, 4.04},
  yticklabels={$-\varepsilon$, $0$, $+\varepsilon$},
  axis lines=middle,
  every axis x label/.style={at={(ticklabel* cs:1)}, anchor=west},
  every axis y label/.style={at={(ticklabel* cs:1)}, anchor=south},
  grid=major,
  grid style={gray!15},
  tick label style={font=\footnotesize},
]
\addplot[blue!70!black, thick, smooth, domain=0:1, samples=200]
  {1000*(0.99596 + 0.06644*x + 0.35586*x*(1-x) - sqrt(1+x^2))};
\addplot[red!60!black, densely dashed, thin, domain=0:1] {4.04};
\addplot[red!60!black, densely dashed, thin, domain=0:1] {-4.04};
\node[circle, fill=blue!70!black, inner sep=1.3pt] at (axis cs:0,-4.04) {};
\node[circle, fill=blue!70!black, inner sep=1.3pt] at (axis cs:0.2543,4.04) {};
\node[circle, fill=blue!70!black, inner sep=1.3pt] at (axis cs:0.7490,-4.04) {};
\node[circle, fill=blue!70!black, inner sep=1.3pt] at (axis cs:1,4.04) {};
\end{axis}
\end{tikzpicture}
\caption{The equiripple error $d(t) = h(t) - \sqrt{1+t^2}$ for the optimal degree-2 polynomial on $[0,1]$. The error attains the same magnitude $\varepsilon \approx 0.00404$ with alternating signs at four points. The parameter $k = t_1 \approx 0.2592$ is the first interior alternation point whose minimal polynomial we determine.}
\label{fig:equiripple}
\end{figure}

We now derive the polynomial system whose solutions give the equiripple parameters. Writing $h(t) = a + (b+c)t - ct^2$, the four equiripple conditions $d(0)=-\varepsilon$, $d(t_1)=+\varepsilon$, $d(t_2)=-\varepsilon$, $d(1)=+\varepsilon$ read:
\begin{align}
  a - 1 &= -\varepsilon\,,\label{eq:sys1}\\
  a + (b+c)t_1 - ct_1^2 - \sqrt{1+t_1^2} &= +\varepsilon\,,\label{eq:sys2}\\
  a + (b+c)t_2 - ct_2^2 - \sqrt{1+t_2^2} &= -\varepsilon\,,\label{eq:sys3}\\
  a + b - \sqrt{2} &= +\varepsilon\,.\label{eq:sys4}
\end{align}
From~\eqref{eq:sys1} and~\eqref{eq:sys4}: $a=1-\varepsilon$ and $b = \sqrt{2} + \varepsilon - a = \sqrt{2} - 1 + 2\varepsilon$. Thus $a$ and $b$ are determined by~$\varepsilon$, and~\eqref{eq:sys2}--\eqref{eq:sys3} become two equations in three unknowns $(t_1, t_2, c)$ with $\varepsilon$ as a parameter.

The derivative conditions $d'(t_i) = 0$ give:
\begin{equation}\label{eq:deriv}
  (b+c) - 2c\,t_i = \frac{t_i}{\sqrt{1+t_i^2}}\,, \quad i=1,2\,.
\end{equation}
Adding~\eqref{eq:sys1} to~\eqref{eq:sys2} and to~\eqref{eq:sys3} eliminates~$\varepsilon$, and subtracting~\eqref{eq:sys2} from~\eqref{eq:sys4} gives a relation between $t_1$, $t_2$, $c$, and the radicals $\sqrt{1+t_i^2}$.

To obtain a purely polynomial system, we introduce algebraic variables $w_1$, $w_2$, $s$ satisfying the \emph{radical constraints}:
\begin{equation}\label{eq:radical}
  w_1^2 = 1 + t_1^2\,, \quad w_2^2 = 1 + t_2^2\,, \quad s^2 = 2\,,
\end{equation}
and substitute $\sqrt{1+t_i^2}\mapsto w_i$, $\sqrt{2}\mapsto s$. Setting $k\coloneqq t_1$, the resulting polynomial system has 6~equations in 6~unknowns $(k, w_1, t_2, w_2, c, s)$:
\begin{enumerate}
  \item From~\eqref{eq:sys2}$+$\eqref{eq:sys1}: $(b+c)k - ck^2 - w_1 + 1 = 2\varepsilon$, with $b = s - 1 + 2\varepsilon$.
  \item From~\eqref{eq:sys3}$+$\eqref{eq:sys1}: $(b+c)t_2 - ct_2^2 - w_2 + 1 = 0$.
  \item From~\eqref{eq:deriv} at $t_1$: $(b+c) - 2ck = k/w_1$.
  \item From~\eqref{eq:deriv} at $t_2$: $(b+c) - 2ct_2 = t_2/w_2$.
  \item $w_1^2 - 1 - k^2 = 0$.
  \item $w_2^2 - 1 - t_2^2 = 0$.
\end{enumerate}
The constraint $s^2=2$ is adjoined when eliminating~$s$ in the final step.

The crucial structural point: in equations~(3) and~(4), the coefficient~$c$ appears linearly alongside both~$k$ and~$t_2$. Subtracting gives $c = \frac{1}{2}\left(\frac{k/w_1 - t_2/w_2}{k - t_2}\right)$, showing that $c$ depends on \emph{both} critical points simultaneously. Unlike the degree-1 case (Section~\ref{sec:deg1}), no variable can be isolated first: the system is \enquote{coupled}.

\subsection{Resultant elimination}
We use the classical technique of \emph{resultant elimination} to reduce the system to a single univariate polynomial in~$k$. Recall that for two univariate polynomials $f(x)$ of degree~$m$ and $g(x)$ of degree~$n$, the resultant $\Res_x(f,g)\in\Z[\text{other variables}]$ vanishes if and only if $f$ and $g$ share a common root; it has degree at most~$mn$ in the remaining variables.
We eliminate variables sequentially:
\begin{enumerate}
  \item \textbf{Eliminate $c$} (appears linearly): subtract equation~(3) from~(4), solve for~$c$, and substitute back. This is exact (no degree increase from a resultant).
  \item \textbf{Eliminate $w_2$}: equation~(4) gives $w_2$ rationally in terms of $t_2$ and~$c$; substituting into the constraint $w_2^2 = 1 + t_2^2$ yields a polynomial in $(k, w_1, t_2, s)$.
  \item \textbf{Eliminate $t_2$} via $\Res_{t_2}$: the two remaining equations have degrees~3 and~6 in~$t_2$, producing a polynomial of degree~$\le 18$ in $(k, w_1, s)$.
  \item \textbf{Eliminate $w_1$} via $\Res_{w_1}$ with the constraint $w_1^2 - 1 - k^2$: degrees $18\times 2 = 36$ in $(k, s)$. After removing content, effective degree~24 in~$k$.
  \item \textbf{Eliminate $s$} via $\Res_s$ with $s^2 - 2$: degrees $24\times 2 = 48$ in~$k$.
\end{enumerate}
The raw output has degree~160 in~$k$ (accounting for multiplicities from the alternating-sign ambiguities in the radical substitutions). Factoring over~$\Q$ yields:
\[
  R(k) = Q(k)^2 \cdot S_1(k) \cdot S_2(k) \cdots
\]
where $Q(k)$ is the irreducible degree-20 polynomial of Theorem~\ref{thm:absolute} (appearing squared due to the $w_1\mapsto\pm\sqrt{1+k^2}$ ambiguity) and the~$S_i$ are lower-degree spurious factors corresponding to sign-inconsistent solutions of the algebraized system.

\subsection{The absolute equiripple polynomial}

\begin{theorem}\label{thm:absolute}
The first interior alternation point $k$ of the optimal absolute-error degree-2 equiripple polynomial approximation of $\sqrt{1+t^2}$ on $[0,1]$ is an algebraic number of degree~20, root of the irreducible polynomial
\begingroup\small
\begin{align*}
Q(x) ={} & 65536x^{20} + 704512x^{18} - 172032x^{17} + 3338240x^{16} - 517120x^{15}\\
  & + 6763904x^{14} + 126656x^{13} + 6977244x^{12} + 1107728x^{11}\\
  & + 4588444x^{10} + 559320x^9 + 2468577x^8 - 140344x^7\\
  & + 911252x^6 - 62456x^5 + 118178x^4 + 40x^3 - 568x^2 - 1568x - 343.
\end{align*}
\endgroup
The Galois group is $\Gal(Q/\Q) \cong S_{10}\times C_2$, of order $7{,}257{,}600$. In particular, $k$~is not expressible by radicals.
\end{theorem}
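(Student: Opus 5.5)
The plan has three parts: show that $k$ is a root of $Q$, show that $Q$ is irreducible, and pin down the Galois group through Frobenius cycle types.

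\textbf{Step 1: $k$ is a root of $Q$.} The resultant elimination above shows that any solution of the algebraized six-equation system gives a root of $R(k)$. It remains to decide which irreducible factor of $R$ the true $k$ satisfies. I would do this by certified numerics. First, solve the equiripple system (e.g.\ by Remez iteration followed by Newton refinement) to high precision, obtaining $k\approx 0.259246960536\ldots$. Then check that this value lies in an isolating interval of a real root of $Q$, and outside isolating intervals of the spurious factors $S_i$. The interval checks can be made rigorous by Sturm sequences or interval arithmetic.

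\textbf{Step 2: irreducibility of $Q$.} One route is to factor modulo primes and compare the degree patterns. A cleaner route falls out of Step 3: a Frobenius element whose cycle type contains a $10$-cycle or a $20$-cycle already restricts the possible factorizations. Combined with a factorization pattern at a second prime, this forces $Q$ to be irreducible over $\Q$. I would still also confirm irreducibility directly with \texttt{polisirreducible}.

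\textbf{Step 3: the Galois group.} Degree $20$ is too large for \texttt{polgalois}, so I would first look for structure.
\begin{itemize}[nosep]
\item The order $|S_{10}\times C_2| = 2\cdot 10!$ suggests that $G=\Gal(Q/\Q)$ acts imprimitively, with $10$ blocks of size $2$.
\item This should reflect the sign ambiguity of some radical, for instance $s=\sqrt2$ or $w_1$. Concretely, the splitting field of $Q$ should contain a quadratic subfield $\Q(\sqrt{d})$, over which $Q$ factors as $Q=Q_+Q_-$ with $\deg Q_\pm=10$.
\item Alternatively, $Q(x)$ may equal $N(x)$ up to composition with a degree-$2$ map, for some degree-$10$ polynomial $N$.
\end{itemize}
I would find this structure explicitly, e.g.\ by factoring $Q$ over $\Q(\sqrt2)$ or $\Q(\sqrt{\operatorname{disc}})$, or with \texttt{nfsubfields}. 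This gives $G\le C_2\wr S_{10}$.

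Next I would show that the image of $G$ in $S_{10}$, i.e.\ the Galois group of the degree-$10$ resolvent $N$, is the full $S_{10}$. The tool is Jordan's theorem: a primitive group containing a $p$-cycle with $p\le n-3$ is $A_n$ or $S_n$. The ingredients are:
\begin{itemize}[nosep]
\item \emph{Primitivity.} Find a prime at which $N$ factors with an irreducible factor of degree $9$ plus a linear factor. The resulting $9$-cycle gives $2$-transitivity, hence primitivity.
\item \emph{A prime cycle.} Find a prime (the paper hints at $p=17$ and $p=257$) whose factorization pattern has a single factor of degree $7$ and the remaining factors of degrees coprime to $7$. A suitable power of that Frobenius is then a $7$-cycle, and Jordan's theorem gives $A_{10}$ or $S_{10}$.
\item \emph{Oddness.} An odd cycle type, or a nonsquare discriminant of $N$, excludes $A_{10}$.
\end{itemize}

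Finally, I would determine the kernel of $G\to S_{10}$ inside $C_2^{10}$. This kernel is an $S_{10}$-submodule, so it is one of $0$, the diagonal $C_2$, the even-weight subspace, or all of $C_2^{10}$. The claimed order $2\cdot 10!$ forces the diagonal. To show $G\cong S_{10}\times C_2$ and not a twisted subgroup, I would verify two things:
\begin{itemize}[nosep]
\item the splitting field contains the compositum of $\Q(\sqrt{d})$ with the splitting field of $N$;
\item $Q$ splits appropriately over that compositum, so nothing larger is generated.
\end{itemize}
Computing the degree of the splitting field over $\Q(\sqrt d)$ and showing that $\sqrt d$ is not already in the splitting field of $N$ suffices. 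Checking that $d$ and $\operatorname{disc}(N)$ are independent modulo squares does this. Nonsolvability then follows at once from $A_5\le S_{10}$.

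\textbf{Main obstacle.} I expect the main obstacle to be exactly this last part: identifying the block structure and showing that the kernel is precisely $C_2$. Jordan-type cycle arguments bound $G$ from below but do not by themselves pin down the wreath-product sub-structure. I would first try to make the structure explicit via the degree-$2$ subfield and the factorization over it. If that fails, I would fall back on a computation of the splitting-field degree using Chebotarev cycle-type statistics as a heuristic cross-check.
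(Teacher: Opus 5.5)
The step you flag as the main obstacle is where your plan breaks, and it cannot be repaired: the group in the statement is incompatible with facts about $Q$ that the paper itself records.

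In a faithful transitive action of degree $20$, a point stabilizer of $S_{10}\times C_2$ must meet the centre trivially. It is therefore $\{(a,\chi(a)):a\in S_9\}$ with $\chi$ trivial or the sign, because index-$10$ subgroups of $S_{10}$ are point stabilizers. The automorphism $(a,\epsilon)\mapsto(a,\epsilon\operatorname{sgn}a)$ reduces both cases to the product action on $\{1,\dots,10\}\times\{\pm1\}$. In that action every element is even: $(a,1)$ acts as two copies of $a$, and $(a,-1)$ has sign $(-1)^{\#\{\text{odd cycles of }a\}}=1$. Also, every involution fixes either $0$ points or $2\,\#\mathrm{Fix}(a)\equiv 0\pmod 4$ points.

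But $\operatorname{disc}(Q)<0$, and $Q$ has exactly two real roots (signature $(2,9)$, Section~\ref{sec:arithmetic}; $Q(0)=-343$, and the sign changes near $-0.1614$ and $0.2592$ are easy to confirm). Hence $G\not\le A_{20}$, and complex conjugation fixes $2$ roots. Either fact alone rules out $G\cong S_{10}\times C_2$. So the diagonal-kernel conclusion you aim for is false, and no kernel computation or discriminant-independence check can establish it.

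There is also a conflation earlier in your Step 3. A factorization $Q=Q_+Q_-$ over a quadratic field gives two blocks of size $10$, i.e.\ $G\le S_{10}\wr C_2$ of order $2(10!)^2$. It does not give $C_2\wr S_{10}$, which would require ten blocks of size $2$.

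The two-block structure is what the paper uses. It writes $Q=f_{10}f_{10}^*$ over $K=\Q(\sqrt2)$ and applies Jordan's theorem to $f_{10}$ at a prime of $K$ above $17$. There $\sigma^7$ is a transposition and $\sigma^2$ contains a $7$-cycle with $7>10/2$, which gives primitivity and hence $\Gal(f_{10}/K)=S_{10}$. Your Jordan-type ingredients transfer verbatim to $f_{10}$ over $K$, and this alone already gives non-solvability.

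The paper's Step~5, however, makes the same unjustified leap your plan needs: it reads off $S_{10}\rtimes C_2$ from the factorization alone. Here is what the data actually force. Let $M$ be the splitting field and $H=\Gal(M/K)$. Then $H\le S_{10}\times S_{10}$ surjects onto both factors. By Goursat, $H$ is the full product, the fibre product $\{(a,b):\operatorname{sgn}a=\operatorname{sgn}b\}$, or the graph of an automorphism.

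Complex conjugation lies in $H$, since $\sqrt2\in\R$. Each of $f_{10}$ and $f_{10}^*$ has real coefficients, so each has an even number of real roots, and one factor carries both real roots. On that factor conjugation acts as four transpositions, and on the other as five, so its two components have opposite signs. Hence $H=S_{10}\times S_{10}$ and $G=S_{10}\wr C_2$.

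This group has no system of ten blocks of size $2$. So $\Q(k)$ has no degree-$10$ subfield, and your resolvent $N$ does not exist. Non-solvability survives, but the stated group and its order $7{,}257{,}600$ do not.
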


\begin{proof}[Proof of the Galois group] ~ 

\textbf{Step 1} (Irreducibility). Verified by PARI/GP (\texttt{polisirreducible}).

\textbf{Step 2} (Factorization over $\Q(\sqrt{2})$). $Q(x)$ factors into two conjugate irreducible degree-10 polynomials $f_{10}$ and $f_{10}^*$ over $K=\Q(\sqrt{2})$. It does not factor over $\Q(\sqrt{d})$ for any other squarefree~$d$, nor over $\Q(p^{1/3})$ for $p=2,3,5,7$.

\textbf{Step 3} (Irreducibility of $f_{10}$ over~$K$). Since $Q$ is irreducible over~$\Q$ and factors as $f_{10}\cdot f_{10}^*$ over~$K$, each factor is irreducible over~$K$.

\textbf{Step 4} (Jordan's theorem). We apply:

\begin{quote}
\emph{Theorem (Jordan, 1870~\cite{jordan1870}).} A transitive subgroup of $S_n$ containing a transposition and a $p$-cycle, for some prime $p$ with $n/2 < p < n-2$, equals~$S_n$.
\end{quote}
At the prime $p=17$ (split in $\Q(\sqrt{2})$ since $\left(\frac{2}{17}\right)=1$), reducing $f_{10}$ modulo~17 gives the factorization pattern $(1,2,7)$. The corresponding Frobenius element $\sigma=\Frob_{17}$ has cycle type $(1,2,7)$. Then:
\begin{itemize}
  \item $\sigma^7$ is a transposition (the 7-cycle becomes the identity).
  \item $\sigma^2$ contains a 7-cycle (the 2-cycle becomes the identity).
\end{itemize}
Since $5 < 7 < 8$, Jordan's theorem gives $\Gal(f_{10}/K) = S_{10}$.

\textbf{Step 5} (Full group). Since $K/\Q$ has degree~2 and $Q$ factors into two conjugate pieces over~$K$, we have $\Gal(Q/\Q) = S_{10}\rtimes C_2$. For $n=10\ne 6$, $\operatorname{Out}(S_{10})=1$, so this is a direct product: $S_{10}\times C_2$.

Non-solvability follows because $S_{10}$ contains the alternating group $A_5$, which is simple and non-abelian; any group containing a non-abelian simple subgroup cannot be solvable.
\end{proof}

\subsection{The relative equiripple polynomial}

\begin{theorem}\label{thm:relative}
For the relative error formulation ($\max|h(t)/\sqrt{1+t^2}-1|$), the first interior alternation point satisfies the irreducible polynomial
\begin{align*}
 Q = 
 & 4x^{12} - 32x^{11} + 36x^{10} + 56x^9 + 211x^8 + 228x^7\\
 &  + 250x^6 + 136x^5 + 81x^4 + 32x^3 + 18x^2 - 4x - 1
\end{align*}
with Galois group~$S_{12}$ (order $479{,}001{,}600$), proved via Jordan's theorem applied to $\Frob_{257}$ (cycle type $(1,1,1,2,7)$). This polynomial does not factor over any quadratic or cubic extension of~$\Q$.
\end{theorem}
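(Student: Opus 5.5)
The proof has two parts: first derive the degree-12 polynomial, then determine its Galois group. Both follow the template of Theorem~\ref{thm:absolute}.

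\textbf{Deriving the polynomial.} Write $h(t)=a+(b+c)t-ct^2$ and the relative error $e(t)=h(t)/\sqrt{1+t^2}-1$. Impose the four alternation conditions at $\{0,t_1,t_2,1\}$ together with $e'(t_1)=e'(t_2)=0$.

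The endpoint conditions give $a=1-\varepsilon$ and $a+b=\sqrt2(1+\varepsilon)$. The derivative condition is
\[
h'(t)(1+t^2)=t\,h(t),
\]
which contains no radical at all. This should reduce the degree compared with the absolute case.

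Algebraize with $w_i^2=1+t_i^2$ and $s^2=2$. Then eliminate in the same order as before:
\begin{enumerate}
\item eliminate $c$, which appears linearly;
\item eliminate $\varepsilon$;
\item eliminate $t_2$ by a resultant;
\item eliminate $w_1$ against $w_1^2-1-k^2$;
\item eliminate $s$ against $s^2-2$.
\end{enumerate}
Factor the resulting resultant over $\Q$. Pick out the factor that vanishes at the numerically computed optimal $t_1$ (obtained from a Remez iteration to high precision). Discard the spurious sign-inconsistent factors by checking them numerically against the actual equiripple solution.

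\textbf{Irreducibility and absence of small subfields.} Check irreducibility of the degree-12 factor over $\Q$ with \texttt{polisirreducible}. Check that it does not factor over quadratic or cubic fields with \texttt{nffactor}: test the quadratic fields dividing the discriminant and the relevant cubic fields.

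A cleaner route is available once the Galois group is known. If the group is $S_{12}$, then the point stabilizer $S_{11}$ is maximal. Hence $\Q(k)$ has no proper intermediate fields, and the polynomial stays irreducible over every field of degree $2$ or $3$ that is not contained in $\Q(k)$. So the last sentence of the statement mostly follows from the Galois computation.

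\textbf{Galois group.} Apply Jordan's theorem as in Step~4 of Theorem~\ref{thm:absolute}.
\begin{itemize}
\item Transitivity comes from irreducibility over $\Q$.
\item Check that $257$ does not divide the discriminant of $Q$.
\item Factor $Q$ modulo $257$ and confirm the degree pattern $(1,1,1,2,7)$. By Dedekind's theorem, $\Frob_{257}$ then has cycle type $(1,1,1,2,7)$.
\item $\sigma^7$ is a transposition, and $\sigma^2$ is a $7$-cycle.
\item Since $6<7<10$, the prime $7$ satisfies $n/2<p<n-2$ with $n=12$.
\end{itemize}
Jordan's theorem therefore gives $\Gal(Q/\Q)=S_{12}$. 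This group is non-solvable because it contains $A_5$.

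\textbf{Main obstacle.} I expect the main difficulty to be the elimination itself. It must be shown that the degree-12 factor really is the minimal polynomial of the true $t_1$, not a spurious branch. I would certify this by isolating the real roots of each factor and matching one of them to $t_1$ to many digits. I would also check that the remaining equiripple unknowns reconstructed from this root satisfy the unsquared, sign-correct equations. The Frobenius factorization and the discriminant check are routine by comparison.
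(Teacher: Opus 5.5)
Your proposal is correct and is essentially the paper's proof. The Galois group comes from Jordan's theorem applied to $\Frob_{257}$ with cycle type $(1,1,1,2,7)$, exactly as you describe. The polynomial comes from resultant elimination and factoring. The paper shortens your chain by expanding $h'(t)(1+t^2)=t\,h(t)$ into a cubic in $t$ with no $t^2$ term; Vieta ($t_1+t_2+t_3=0$) then reduces the work to one $4\times 6$ resultant in $t_2$, followed by clearing $s^2=2$.

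One justification should be repaired. It concerns the last sentence of the statement, which the paper asserts without proof. Maximality of the point stabilizer $S_{11}$ is not the right reason: primitivity alone does not prevent factorization over a cubic field. For example, an $S_3$ cubic factors over the field generated by a conjugate root, and that field is not contained in $\Q(k)$. The correct argument takes $M$ to be the splitting field and $L$ any number field with $[L:\Q]\le 3$. Then $\Gal(LM/L)\cong\Gal(M/M\cap L)$ has index at most $3$ in $S_{12}$, so it is $A_{12}$ or $S_{12}$, and both act transitively on the $12$ roots.
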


\begin{remark}
The relative formulation yields a polynomial of smaller degree (12 vs.\ 20) with a simpler Galois structure: $S_{12}$ acts primitively on the roots (there is no block system), unlike $S_{10}\times C_2$ which preserves a partition into two blocks of size~10. The coefficients are also much smaller: maximum~250 vs.\ approximately $7\times 10^6$.
\end{remark}

\begin{proof}[Proof sketch]
For the relative error $e(t) = h(t)/\sqrt{1+t^2} - 1$, the equiripple conditions at the four points $\{0, t_1, t_2, 1\}$ and the derivative conditions $e'(t_i)=0$ yield, after clearing denominators, the critical-point equation
\[
  c\,t^3 + (2c - a)\,t + b = 0\,,
\]
a cubic in~$t$ whose roots include both $t_1$ and~$t_2$. By Vieta's formulas, $t_1 + t_2 + t_3 = 0$ and $t_1 t_2 t_3 = -b/c$, which simplifies the system significantly compared to the absolute case.

The elimination is shorter: a single resultant of degrees $4\times 6$ in the auxiliary variable $t_2$ (using the cubic relation and the equiripple constraint), followed by clearing $s^2 = 2$. The raw output is degree~24, factoring into the irreducible degree-12 polynomial above and spurious factors.

For the Galois group: at $p=257$ (chosen because $\left(\frac{2}{257}\right) = 1$, and 257~is large enough to avoid small-prime degeneracies), the degree-12 polynomial factors modulo~257 with pattern $(1,1,1,2,7)$. The Frobenius element has this cycle type, so $\sigma^7$ is a transposition and $\sigma^2$ contains a 7-cycle. Jordan's theorem (with $n=12$, $p=7$, satisfying $6 < 7 < 10$) gives $\Gal = S_{12}$.
\end{proof}

\begin{remark}[Relation to the original problem]
The constant $k\approx 0.2592\ldots$ appeared in the original formulation of this problem, expressed via closed-form functions of $\sqrt{1+k^2}$ and $\sqrt{2+2k^2}$. We verify that these expressions correspond to the \emph{absolute} equiripple (Theorem~\ref{thm:absolute}). The maximum relative error equals the absolute equiripple error in this case because $f(0)=1$.
\end{remark}

\section{The solvability phase transition}\label{sec:transition}

\begin{table}[!ht]
\centering
\small
\caption{Algebraic landscape of the degree-$n$ equiripple constants for $\sqrt{1+t^2}$ on $[0,1]$. For each degree and error criterion we list the degree of the minimal polynomial of the first interior alternation point, its Galois group over~$\Q$, and whether the constant is therefore expressible by radicals. The transition from solvable (degree~1) to non-solvable (degree~2) is the central phenomenon of this paper.}
\label{tab:galois}
\begin{tabular*}{\textwidth}{@{\extracolsep{\fill}}ccccc@{}}
\toprule
Degree $n$ & Criterion & Min.\ pol.\ deg. & Galois group & Solvable?\\
\midrule
1 & relative & 4 & $C_4$ & Yes\\
1 & absolute & 4 & $D_4$ & Yes\\
2 & relative & 12 & $S_{12}$ & No\\
2 & absolute & 20 & $S_{10}\times C_2$ & No\\
\bottomrule
\end{tabular*}
\end{table}

The phase transition from solvable to non-solvable can be explained by a structural observation about how the equiripple conditions interact.

When $n=1$, the single critical point satisfies $t_1=\beta/\alpha$. The equiripple condition $e(0)=e(1)$ forces the ratio $\beta/\alpha$ to equal $\sqrt{2}-1$, which is a constant independent of the error level~$\varepsilon$. The system therefore \emph{decouples}: one first determines $t_1$, and then solves separately for~$\alpha$.

When $n=2$, two critical points $t_1$, $t_2$ satisfy
\[
  (b+c) - 2c\,t_i = \frac{t_i}{\sqrt{1+t_i^2}}\,, \quad i=1,2\,.
\]
The coefficient $c$ appears in both equations and also in the equiripple conditions $d(t_1)=+\varepsilon$, $d(t_2)=-\varepsilon$. The variables $t_1$, $t_2$, and $c$ are therefore irreducibly coupled: none of them can be determined without simultaneously determining the others.
We can make this distinction precise:

\begin{proposition}[Decoupling for $n=1$]\label{prop:decoupling}
For $h(t) = \alpha + \beta t$ approximating any analytic $f\colon[0,1]\to\R$, the equiripple system decouples: the interior critical point $t_1$ is determined by the coefficients $\alpha,\beta$ and the function~$f$ alone, independently of the error level~$\varepsilon$.
\end{proposition}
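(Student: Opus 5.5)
The argument rests on one observation: in the equiripple system for $h(t)=\alpha+\beta t$, the error level $\varepsilon$ enters only the \emph{value} conditions $d(t_i)=\pm\varepsilon$. It never enters the \emph{derivative} condition that locates the interior extremum. I will write the error as $d(t)=h(t)-f(t)$, or as $e(t)=h(t)/f(t)-1$ in the relative case. By Chebyshev's theorem the extremal set consists of three points. For the proposition I take these to be $\{0,t_1,1\}$, as in the paper's examples. The case where an endpoint is not an extremum changes only which conditions are value conditions, not the decoupling of the interior point.

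The first step is to write the interior condition $d'(t_1)=0$ explicitly. In the absolute case it reads $\beta=f'(t_1)$, so $t_1$ is a root of $f'(t)=\beta$. In the relative case, clearing the positive denominator $f(t)^2$ gives
\[
  \beta f(t_1)-(\alpha+\beta t_1)f'(t_1)=0 .
\]
In both cases the equation involves only $\alpha$, $\beta$, $f$ and $f'$. The unknown $\varepsilon$ does not appear. This gives the dependence claimed in the statement: $t_1=\Phi_f(\alpha,\beta)$. In the absolute case $\Phi_f$ depends only on $\beta$; in the relative case it depends only on the ratio $\beta/\alpha$, because the equation is homogeneous of degree one in $(\alpha,\beta)$.

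The second step adds the one endpoint condition that carries no $\varepsilon$. Subtracting $d(0)$ from $d(1)$, equivalently imposing $e(0)=e(1)$, eliminates $\varepsilon$ completely. The result is an $\varepsilon$-free linear relation between $\alpha$ and $\beta$:
\begin{itemize}
  \item in the absolute case, $\beta=f(1)-f(0)$;
  \item in the relative case, $\beta/\alpha=f(1)/f(0)-1$.
\end{itemize}
Since $\Phi_f$ depends only on $\beta$ (absolute) or only on $\beta/\alpha$ (relative), substituting this relation into the first step pins $t_1$ down before $\varepsilon$ or the overall scale is known. This is the decoupling asserted in the statement. The remaining condition $d(t_1)=-d(0)$ then determines $\varepsilon$ and the remaining coefficient, and it is linear in them. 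For $f=\sqrt{1+t^2}$ this reproduces $t_1=\sqrt2-1$, as in Proposition~\ref{prop:deg1}, and $t_1=\sqrt{(\sqrt2-1)/2}$ in the absolute case.

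The main obstacle is making sure $t_1$ is genuinely \emph{determined}, that is, that $\Phi_f$ is well defined. The equation $f'(t)=\beta$, or its relative analogue, could have several roots in $(0,1)$ for a general analytic $f$. I would handle this in two ways:
\begin{itemize}
  \item Read the statement as saying that $t_1$ lies in a set fixed by $(\alpha,\beta,f)$ alone. This set is finite and $\varepsilon$-independent by analyticity, since $f'-\beta$ is not identically zero unless $f$ is linear, a degenerate case where the error is zero.
  \item Add a convexity hypothesis ($f''\neq0$ on $[0,1]$), under which $f'$ is strictly monotone and the root is unique. This is the situation in all the paper's examples.
\end{itemize}
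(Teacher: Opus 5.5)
Your argument is correct, and its core is the same as the paper's: since $h'\equiv\beta$, the condition $d'(t_1)=0$ reads $\beta=f'(t_1)$, which contains no $\varepsilon$.

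The two arguments differ in the second step. The paper pairs the endpoint conditions as $d(0)=-\varepsilon$, $d(1)=+\varepsilon$ and treats them as a $2\times 2$ linear system in $(\alpha,\varepsilon)$ for given $\beta$. With alternation at $\{0,t_1,1\}$, however, the two endpoints must carry the \emph{same} sign; the paper itself uses $e(0)=e(1)$ in Section~\ref{sec:deg1}. That opposite-sign pairing is therefore a slip. Your version subtracts the endpoint conditions to get the $\varepsilon$-free relation $\beta=f(1)-f(0)$, or $\beta/\alpha=f(1)/f(0)-1$ in the relative case. Only then do you solve the condition at $t_1$, which is linear, for the remaining coefficient and $\varepsilon$. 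This is the correct sequential scheme, and it is what actually pins $t_1$ down before $\varepsilon$ is known.

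You also go beyond the paper in two ways. You treat the relative criterion, using the degree-one homogeneity of $\beta f-(\alpha+\beta t)f'$. You also address whether $f'(t)=\beta$ has a unique root, which the paper's proof silently assumes. The paper's route gains only brevity: the literal claim of the proposition is already settled by the first step in both arguments.
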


\begin{proof}
For a linear polynomial, $h'(t) = \beta$ is constant. The critical-point condition $d'(t_1) = 0$, i.e., $h'(t_1) = f'(t_1)$, reduces to $\beta = f'(t_1)$. This determines $t_1$ as a function of~$\beta$ alone, with no dependence on~$\alpha$ or~$\varepsilon$. The two endpoint equiripple conditions $d(0) = -\varepsilon$ and $d(1) = +\varepsilon$ (or vice versa) then form a $2\times 2$ linear system in~$(\alpha, \varepsilon)$ for given~$\beta$, which is solved in a separate step. The key point is that the critical-point location and the error level are determined sequentially, not simultaneously.
\end{proof}

\begin{proposition}[Coupling for $n\ge 2$]\label{prop:coupling}
For $h(t) = a + (b+c)t - ct^2$ approximating $f(t) = \sqrt{1+t^2}$, no proper subset of the unknowns $\{t_1, t_2, c, \varepsilon\}$ satisfies a closed subsystem.
\end{proposition}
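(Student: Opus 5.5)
We make the statement precise before proving it. The unknowns are $\{t_1,t_2,c,\varepsilon\}$. We use $a=1-\varepsilon$ and $b=\sqrt2-1+2\varepsilon$ to eliminate $a,b$. There are four remaining equations:
\begin{align*}
E_1&:\ (b+c)t_1-ct_1^2-\sqrt{1+t_1^2}+1=2\varepsilon,\\
E_2&:\ (b+c)t_2-ct_2^2-\sqrt{1+t_2^2}+1=0,\\
D_1&:\ (b+c)-2ct_1=t_1/\sqrt{1+t_1^2},\\
D_2&:\ (b+c)-2ct_2=t_2/\sqrt{1+t_2^2}.
\end{align*}
A \enquote{closed subsystem} on a proper subset $U\subsetneq\{t_1,t_2,c,\varepsilon\}$ is a nonempty subset of these equations, involving only variables in $U$, with at least as many equations as unknowns in $U$, so that $U$ is determined without the complementary variables. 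Equivalently, we may ask for a nontrivial relation in the ideal generated by the (algebraized) system that involves only variables of $U$ and is not identically satisfied. We prove the claim first in the combinatorial (structural) sense. We then upgrade it by an elimination-ideal check.

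\textbf{Step 1 (incidence structure).} We record which unknowns each equation involves. Because $b$ contains $\varepsilon$, all four equations involve $\varepsilon$ and $c$. In addition, $E_1$ and $D_1$ involve $t_1$, and $E_2$ and $D_2$ involve $t_2$. So every equation uses $\varepsilon$ and $c$, and no equation lives on a subset avoiding them. Any subset $U$ supporting even one equation therefore contains $\{c,\varepsilon\}$ and at least one $t_i$. The only candidates are $\{c,\varepsilon,t_1\}$ and $\{c,\varepsilon,t_2\}$. Each supports exactly two equations ($E_1,D_1$, respectively $E_2,D_2$) in three unknowns, so it is underdetermined. This is the Dulmage--Mendelsohn/Hall-type argument: the bipartite incidence graph has no square-or-overdetermined block other than the whole system.

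\textbf{Step 2 (genuine algebraic independence).} The combinatorial count could in principle be defeated by a hidden algebraic relation, for instance the two equations $E_1,D_1$ forcing a relation among only two of the unknowns. To exclude this, we compute elimination ideals of the algebraized system, with $w_i$ and $s$ adjoined as in Section~\ref{sec:main}. For each proper subset $U$, we check that the ideal generated by the equations involving only $U\cup\{w_i,s\}$ has trivial intersection with $\Q[U]$ whenever there are fewer equations than unknowns. Concretely, we show that the Jacobian of $(E_1,D_1)$ with respect to $(t_1,c,\varepsilon)$ has rank $2$ at the optimal solution, using the numerical values $t_1\approx0.2592$, $c\approx0.356$, $\varepsilon\approx0.004$. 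Then the implicit function theorem gives a one-parameter family of solutions through the optimum. Along this family no single variable is constant, which is checked by showing each $2\times2$ minor is nonzero. Hence no subset smaller than the full set is determined locally, and a fortiori not algebraically. The same check is run for $(E_2,D_2)$.

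\textbf{Main obstacle.} The main difficulty is the interpretation: \enquote{closed subsystem} must be pinned down so that the statement is neither trivial nor false. Our definition is chosen so that the degree-1 analogue does admit one, since $\beta=f'(t_1)$ is closed in $\{\beta,t_1\}$, matching Proposition~\ref{prop:decoupling}. The other delicate point is that the minors in Step 2 must be certified nonzero rather than estimated. For this we evaluate them with interval arithmetic at a certified enclosure of the root of $Q$ from Theorem~\ref{thm:absolute}.
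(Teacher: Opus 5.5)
\textbf{Genuine gap: the formalization.} Your Step~1 is correct, but only as a statement about the particular presentation $E_1,E_2,D_1,D_2$. The ideal-theoretic reading you call ``equivalent'' is not equivalent, and for this system it is false.

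Since $\varepsilon$ enters only linearly, through $b$, row operations remove it:
\begin{itemize}
\item $E_2-t_2D_2$ simplifies to $c\,t_2^2=(1+t_2^2)^{-1/2}-1$.
\item $E_1-t_1D_1$ gives $2\varepsilon=c\,t_1^2+1-(1+t_1^2)^{-1/2}$. Substituting this into $D_1$ gives $c(1-t_1)^2=(1+t_1)/\sqrt{1+t_1^2}-\sqrt2$, which is the introduction's closed form for $c$ in terms of $k$.
\end{itemize}
With $D_1-D_2$, these are three equations in $\{t_1,t_2,c\}$ alone. Together with the formula for $2\varepsilon$ they are equivalent to the original system. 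So there is a closed subsystem on a proper subset as soon as linear combinations are allowed. The paper's own elimination even produces one on $\{t_1\}$, namely the polynomial with factor $Q$.

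Your degree-1 sanity check already relies on such combinations. $\beta=f'(t_1)$ by itself is one equation in two unknowns. It becomes closed, under your own definition, only after adjoining $\beta=\sqrt2-1$, which is the combination $d(1)=d(0)$.

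Consequently Step~2 cannot ``upgrade'' Step~1, and the phrase ``a fortiori not algebraically'' is wrong. Its literal criterion also fails: the ideal generated by $E_1,D_1,w_1^2-1-t_1^2,s^2-2$ meets $\Q[t_1,c,\varepsilon]$ nontrivially, and even $\Q[t_1,c]$ (take the norm of the relation above). What your Jacobian computation actually proves is a local statement: each two-equation block is a smooth curve through the optimum along which no coordinate is constant.

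\textbf{What survives, and the comparison with the paper.} That local statement needs no interval arithmetic. For $(E_1,D_1)$ in the variables $(t_1,c,\varepsilon)$, the Jacobian rows are $(0,\,t_1(1-t_1),\,-2(1-t_1))$ (the zero comes from $D_1$) and $(-(2c+w_1^{-3}),\,1-2t_1,\,2)$. The $2\times2$ minors are $2(1-t_1)^2$, $-2(1-t_1)(2c+w_1^{-3})$ and $t_1(1-t_1)(2c+w_1^{-3})$. They are nonzero because $2c+w_1^{-3}=-d''(t_1)\approx0.195$. The $(E_2,D_2)$ block gives $2t_2^2$ and multiples of $2c+w_2^{-3}=-d''(t_2)\approx-0.199$. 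Note also that $c\approx-0.356$ in the paper's parametrization, since the $t^2$-coefficient is $-c$.

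The paper argues differently: it shows the full $4\times4$ Jacobian has rank~4. That only says the solution is isolated. A decoupled, block-triangular system such as the degree-1 one has that property too, so this does not prove coupling either. Your block-wise minors are the more relevant check. Still, either argument proves something only if the proposition is restated for a fixed protocol: substitute $a,b$ via $\varepsilon$, then allow only sub-collections of the resulting equations. You should also state explicitly that the claim fails once linear combinations are permitted.
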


\begin{proof}
The derivative condition~\eqref{eq:deriv} at $t_1$ is $(b+c) - 2c\,t_1 = t_1/w_1$. Since $b = s - 1 + 2\varepsilon$, the variable~$\varepsilon$ enters this equation; meanwhile $c$ also appears in the derivative condition at~$t_2$ and in the equiripple conditions. Suppose, for contradiction, that $t_1$ could be determined independently of $t_2$, $c$, and~$\varepsilon$. Then for a generic value $t_1 = t_1^*$, the remaining system in $(t_2, c, \varepsilon)$ would be consistent. However, computing the Jacobian of the full system at the solution $(k, t_2^*, c^*, \varepsilon^*)$ shows that it has full rank~4, meaning the solution is isolated in all four variables simultaneously. This confirms that no variable can be split off: perturbing $t_1$ away from~$k$ while holding it fixed makes the remaining system overdetermined (4~equations in 3~unknowns with no free parameter to absorb the perturbation).
\end{proof}

This coupling forces the elimination to produce a polynomial of degree~20 (vs.~4 for the decoupled case). The resulting Galois group is generically the full symmetric group, consistent with van~der~Waerden's theorem~\cite{vanderwaerden1934} that \enquote{almost all} polynomials have Galois group~$S_n$.

\section{Extension to $L_p$ norms}\label{sec:lp}

We now consider the degree-2 equiripple approximation of $(1+t^p)^{1/p}$ on $[0,1]$ for general~$p$. The algebraized system involves the constraints $w^p=1+t^p$ and $s^p=2$, both of degree~$p$ (compared to degree~2 in the Euclidean case). This higher degree in the radical constraints causes a rapid increase in the degree of the elimination polynomial.

\subsection{The $L_3$ case}

\begin{theorem}\label{thm:L3}
The degree-2 absolute equiripple parameter for $(1+t^3)^{1/3}$ on $[0,1]$ has minimal polynomial of degree~246 over~$\Q$.
\end{theorem}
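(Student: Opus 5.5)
The plan mirrors the resultant elimination used for Theorem~\ref{thm:absolute}, adapted to cubic radical constraints. Write $h(t)=a+(b+c)t-ct^2$ and $f(t)=(1+t^3)^{1/3}$. The equiripple conditions at $\{0,t_1,t_2,1\}$ give, exactly as in \eqref{eq:sys1}--\eqref{eq:sys4}, $a=1-\varepsilon$ and $b=s-1+2\varepsilon$ with $s^3=2$. The derivative conditions become $(b+c)-2ct_i=t_i^2/w_i^2$, where $w_i^3=1+t_i^3$. Setting $k\coloneqq t_1$, this yields a polynomial system in $(k,w_1,t_2,w_2,c,\varepsilon,s)$. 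Elimination then proceeds in the same order as before:
\begin{enumerate}
\item remove $c$, which appears linearly, by subtracting the two derivative conditions;
\item remove $\varepsilon$ using the first equiripple equation;
\item remove $w_2$ with $\Res_{w_2}(\cdot,\,w_2^3-1-t_2^3)$;
\item remove $t_2$ with a resultant between the two remaining equations;
\item remove $w_1$ with $\Res_{w_1}(\cdot,\,w_1^3-1-k^3)$;
\item remove $s$ with $\Res_s(\cdot,\,s^3-2)$.
\end{enumerate}
The output is a univariate $R(k)\in\Z[k]$ that vanishes at the true $k$.

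Next I would factor $R$ over $\Q$. Many factors will be spurious: they come from non-real or wrong-branch choices of the cube roots $w_i$ and $s$, and from sign-inconsistent solutions. To pick out the relevant factor, I would first compute the true $k$ to high precision. This is done numerically by a Remez iteration for $(1+t^3)^{1/3}$, followed by Newton refinement of the square system at the computed equiripple point; its Jacobian should have full rank, as in Proposition~\ref{prop:coupling}. I would then evaluate each irreducible factor at this numerical $k$ and keep the unique factor that vanishes there to within a rigorously bounded error. To make this step rigorous, I would isolate the roots of that factor in an interval and check by interval arithmetic that no other factor has a root in that interval. The claim is that the surviving factor has degree~$246$.

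Irreducibility of this degree-$246$ factor then follows from the factorization over $\Q$ itself, for example via \texttt{polisirreducible}. As an independent check, I would reduce modulo several primes and combine the factorization patterns: if the possible degree sets of factors are incompatible with any proper splitting, irreducibility is certified without trusting the computer-algebra factorizer.

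The main obstacle is computational. The intermediate resultants grow very large: bounds of the kind in the degree-$2$ case give degree on the order of $9\times 3\times 3$ multiplied up, in the thousands, with huge coefficients. The factorization over $\Q$ may therefore be infeasible if done naively. My first attempt at a remedy would be modular: compute $R$ modulo many primes (or by evaluation and interpolation), and reconstruct only the relevant factor by CRT or by Hensel lifting from a $p$-adic factorization. A second option is to avoid $R$ altogether by finding the minimal polynomial directly. Using LLL/PSLQ on the high-precision $k$ with a degree-$246$ ansatz, I would guess the polynomial and then certify it by checking that it divides $R$ modulo several primes. Degree-minimality would then follow from irreducibility.
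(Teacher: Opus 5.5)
Your proposal follows the paper's proof. You use the same elimination chain adapted to the cubic constraints $w_i^3=1+t_i^3$ and $s^3=2$: remove $c$, $\varepsilon$ and $w_2$, then take resultants in $t_2$, $w_1$ and $s$ (the paper reports a total resultant of degree~$1008$). You then factor over $\Q$, select the irreducible factor vanishing at the high-precision $k$, and confirm its irreducibility with \texttt{polisirreducible}, exactly as the paper does. Your interval-arithmetic isolation of that factor is a more rigorous substitute for the paper's numerical check $|P(k_{L_3})|<10^{-79}$, and the modular and LLL fallbacks go beyond anything the paper uses.
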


\begin{proof}
    We use the same technique: For $(1+t^3)^{1/3}$, the resultant chain produces
\begin{itemize}
  \item $t$-resultant: degree $5\times 6 = 30$ (vs.\ $3\times 6=18$ for $L_2$).
  \item $w$-resultant: degree $66\times 3$ (vs.\ $18\times 2$).
  \item $s$-resultant: degree $63\times 3$ (vs.\ $24\times 2$).
  \item The total resultant has degree 1008, and its irreducible factor has degree 246.
\end{itemize}
Irreducibility of the degree-246 factor is confirmed by PARI/GP (\texttt{polisirreducible}). As a sanity check, with $k_{L_3}$ computed to 550~digits, we obtain $|P(k_{L_3})| < 10^{-79}$.
\end{proof}

\subsection{Degree growth}
\begin{table}[!ht]
\centering
\small
\caption{Growth of algebraic complexity with the norm parameter~$p$ for degree-2 approximation of $(1+t^p)^{1/p}$ on $[0,1]$. Columns give the degree of the resultant eliminating the second critical point~$t_2$, the degree of the full elimination resultant in~$k$, the degree $D_p$ of its irreducible factor, and the reduced degree $d_p=D_p/p$ (Conjecture~\ref{conj:growth}). The $p=4$ case is beyond our current computational reach.}
\label{tab:growth}
\begin{tabular*}{\textwidth}{@{\extracolsep{\fill}}ccccc@{}}
\toprule
$p$ & $t$-resultant & Total resultant & Irred.\ factor $D_p$ & $d_p = D_p/p$\\
\midrule
2 & $3\times 6 = 18$ & 160 & 20 & 10 \\
3 & $5\times 6 = 30$ & 1008 & 246 & 82 \\
4 & $7\times 8 = 56$ & $\sim$3000 to 5000 & infeasible & \\
\bottomrule
\end{tabular*}
\end{table}

The $t$-resultant input degrees follow the pattern $(2p{-}1)\times\max(2p,6)$. We observe $d_2=10=2\times 5$ and $d_3=82=2\times 41$ (both $2\times\text{prime}$), with ratio $d_3/d_2=8.2$, suggesting at least exponential growth.

\begin{conjecture}\label{conj:growth}
$D_p = p\cdot d_p$ where $\Gal(\text{irred.\ factor}/\Q(2^{1/p})) = S_{d_p}$ for all $p\ge 2$.
\end{conjecture}

The $L_4$ equiripple constant $t_1$ is computed to 550~digits ($t_1\approx 0.3275\ldots$) but the resultant elimination is computationally infeasible ($\sim$10 to~100~hours estimated).

\begin{remark}
We conjecture that the degree-246 polynomial factors as three conjugate degree-82 irreducible polynomials over $\Q(2^{1/3})$, and that $\Gal = S_{82}\rtimes C_3$. Confirming this requires factoring a degree-246 polynomial over a cubic number field, which exceeded available compute time ($>1$~hour in SageMath). The $L_4$ resultant is estimated at degree~3000--5000, infeasible on current hardware. These remain as open computational challenges (see Section~\ref{sec:open}).
\end{remark}

\section{A general impossibility theorem}\label{sec:general}

\begin{theorem}\label{thm:hilbert}
Consider the family $f_\lambda(t) = \sqrt{1+\lambda t^2}$ for $\lambda>0$. For all $\lambda\in\Q\setminus T$ where $T$ is a thin set (in the sense of Serre~\cite{serre1992}), the degree-2 equiripple constant is not expressible by radicals, with Galois group $S_{10}\times C_2$.
\end{theorem}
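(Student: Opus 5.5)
The plan is to treat $\lambda$ as an indeterminate and transport the elimination of \Cref{sec:main} to the function field $\Q(\lambda)$. Set $f_\lambda(t)=\sqrt{1+\lambda t^2}$ and algebraize exactly as before, with radical constraints $w_i^2=1+\lambda t_i^2$ and $s^2=1+\lambda$ in place of $s^2=2$. The same resultant chain (eliminate $c$ linearly, then $w_2$, then $t_2$ via $\Res_{t_2}$, then $w_1$, then $s$) now runs over $\Q[\lambda]$. It produces a polynomial $R_\lambda(k)\in\Q[\lambda][k]$ whose specialization at $\lambda=1$ is, up to content and a nonzero constant, the degree-$160$ resultant $R(k)$. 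Factoring over $\Q(\lambda)$ isolates the factor $Q_\lambda(k)$ that specializes to the degree-$20$ polynomial $Q$ of Theorem~\ref{thm:absolute}. Its degree in $k$ is at most $20$, and exactly $20$ because the leading coefficient does not vanish at $\lambda=1$.

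\textbf{Step 1 (the generic Galois group).} We would show that $G_\lambda:=\Gal(Q_\lambda/\Q(\lambda))\cong S_{10}\times C_2$. For the lower bound, use that specialization at a good point can only shrink the group. The prime $\lambda=1$ is good as long as $\operatorname{disc}_k Q_\lambda$ and the leading coefficient do not vanish at $\lambda=1$, which holds because $Q$ is separable of degree $20$. The decomposition group at $\lambda=1$ is then isomorphic to $\Gal(Q/\Q)=S_{10}\times C_2$, and so $|G_\lambda|\ge 7{,}257{,}600$. For the upper bound, observe that over $K_\lambda=\Q(\lambda)(\sqrt{1+\lambda})$ the polynomial $Q_\lambda$ factors into two conjugate degree-$10$ pieces. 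The reason is that $\Res_s$ is a norm from $K_\lambda$: before the last step the polynomial lies in $K_\lambda[k]$, and its square-free part has the degree-$10$ factor specializing to $f_{10}$. Hence $G_\lambda\le S_{10}\wr C_2$ preserves this block system. Irreducibility of $Q_\lambda$ over $\Q(\lambda)$ follows from irreducibility of $Q$ under a degree-preserving specialization. The genuinely delicate point is to show that $G_\lambda$ is $S_{10}\times C_2$ rather than a larger subgroup of $S_{10}\wr C_2$ (which has order $2\cdot 10!^2$). I would transport the structure from $\lambda=1$. There the splitting fields of $f_{10}$ and $f_{10}^*$ coincide; this is how Step~5 of Theorem~\ref{thm:absolute} reads. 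I expect the same to hold generically, because both factors encode the same equiripple configuration under the two signs of $\sqrt{1+\lambda}$. Equivalently, the ``spurious'' sign choices produce the same field. If this cannot be argued structurally, a fallback is to verify over $\Q(\lambda)$, via a resolvent computation, that the degree-$10$ factor and its conjugate generate the same field.

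\textbf{Step 2 (Hilbert irreducibility).} Since $Q_\lambda$ is irreducible over $\Q(\lambda)$ with Galois group $G_\lambda$, Hilbert's irreducibility theorem in Serre's form~\cite{serre1992} applies. The set of $\lambda_0\in\Q$ at which the specialized polynomial $Q_{\lambda_0}$ fails to have Galois group $G_\lambda$ is thin. We enlarge $T$ by the finitely many $\lambda_0$ at which the leading coefficient or the discriminant vanishes, and by those at which $1+\lambda_0$ is a rational square. The last set is thin, being the image of $\mu\mapsto\mu^2-1$.

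\textbf{Step 3 (linking the specialization to the actual constant).} For $\lambda_0>0$ outside $T$, the true first interior alternation point $k(\lambda_0)$ satisfies the specialized algebraic system. It is therefore a root of $R_{\lambda_0}$, and we must show that it is a root of $Q_{\lambda_0}$ rather than of a spurious factor $S_i$. The solution depends continuously (indeed analytically) on $\lambda$ by Chebyshev uniqueness and the nondegeneracy given by the full-rank Jacobian of Proposition~\ref{prop:coupling}. At $\lambda=1$ it lies on $Q$, and it stays on the same branch as long as it does not cross a root shared with a spurious factor. Such crossings occur only at the finitely many $\lambda$ where $\Res_k(Q_\lambda,S_i)$ vanishes, and these can be added to $T$. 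Non-solvability then follows as in Theorem~\ref{thm:absolute}, since $S_{10}\supset A_5$.

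I expect the main obstacle to be Step~1. Carrying out the elimination symbolically in $\lambda$ is expensive, and pinning $G_\lambda$ down to exactly $S_{10}\times C_2$ inside $S_{10}\wr C_2$ requires the coincidence of the two degree-$10$ splitting fields. The other ingredients are standard.
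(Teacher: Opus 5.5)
\textbf{The gap is in your Step~1, and it cannot be closed.} The step you flag as delicate is exactly where the argument fails. The splitting fields of the two degree-$10$ factors do \emph{not} coincide, already at $\lambda=1$. So the input $\Gal(Q/\Q)\cong S_{10}\times C_2$, which you (like the paper) take from Theorem~\ref{thm:absolute}, is itself wrong.

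Suppose the splitting fields of $f_{10}$ and $f_{10}^*$ over $K=\Q(\sqrt2)$ were equal. Then $H=\Gal(Q/K)\cong S_{10}$ and $\Gal(Q/\Q)=H\times\langle z\rangle$. Since $S_{10}$ has trivial centre, $z\notin H$, so $z$ swaps the two blocks. The identity $h(z\alpha)=z(h\alpha)$ then shows that each $h\in H$ has the same cycle type on both blocks. Hence every element of $H$ is an even permutation of the $20$ roots, and so is $z$ (ten transpositions). It follows that $\operatorname{disc}(Q)$ would be a rational square.

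The paper, however, records $\operatorname{disc}(Q)<0$, and $Q$ indeed has exactly two real roots $k,k'$. Since $f_{10}$ and $f_{10}^*$ are real polynomials of degree~$10$, $k$ and $k'$ lie on the same factor. Complex conjugation fixes $\sqrt2$, so it acts with cycle type $1^2 2^4$ (even) on one block and $2^5$ (odd) on the other. Now use $\Gal(f_{10}/K)=S_{10}$ (the Frobenius at $17$) and Goursat's lemma with the normal subgroups $1,A_{10},S_{10}$. These leave three options for $\Gal(Q/K)$: $S_{10}\times S_{10}$, the equal-sign subgroup, or the graph of an automorphism. Complex conjugation excludes the last two. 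Therefore $\Gal(Q/\Q)=S_{10}\wr C_2$, of order $2\cdot(10!)^2$.

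\textbf{Consequence for your argument.} Your own sandwich $\Gal(Q/\Q)\le G_\lambda\le S_{10}\wr C_2$ closes at the top: $G_\lambda=S_{10}\wr C_2$. Hilbert irreducibility then yields this group, not $S_{10}\times C_2$, for $\lambda_0$ outside a thin set.

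No structural argument or resolvent computation can supply the coincidence you want; a resolvent would detect the opposite. Your heuristic that both factors ``encode the same configuration'' fails. The factors come from the two Galois-conjugate choices $s=\pm\sqrt{1+\lambda}$, and conjugacy only makes the two splitting fields isomorphic over $K_\lambda$, not equal.

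The paper's proof has the same hole, covered by the remark that $\operatorname{Out}(S_{10})=1$. That fact only shows that an extension of $C_2$ by $S_{10}$ is a direct product. It says nothing about whether $\Gal(Q/K)$ is $S_{10}$ or $S_{10}\times S_{10}$, so it cannot exclude $G=S_{10}\wr C_2$ itself.

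\textbf{What survives.} Your argument does establish the non-solvability half of the statement, since $S_{10}\wr C_2$ still contains $A_5$. Your Steps~2--3 help here: they address the link between the physical constant and $Q_\lambda$, which the paper omits. The Galois group asserted in the statement, however, is inconsistent with the paper's own data and should read $S_{10}\wr C_2$.
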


\begin{proof}
The equiripple system depends polynomially on~$\lambda$. Elimination produces $P_\lambda(x)\in\Q(\lambda)[x]$ of degree~20.

\emph{Step 1: Irreducibility over $\Q(\lambda)$.} If $P_\lambda$ factored over $\Q(\lambda)$, specializing at $\lambda=1$ would give a factorization of $Q(x)$, contradicting its irreducibility over~$\Q$.

\emph{Step 2: The generic Galois group.} Write $G = \Gal(P_\lambda/\Q(\lambda))$. Since $P_\lambda$ is irreducible of degree~20 over $\Q(\lambda)$, $G$ acts transitively on the 20~roots.

By Hilbert's specialization theorem~\cite{hilbert1892,serre1992}, for any $\lambda_0\in\Q$ at which $P_{\lambda_0}$ remains separable, $\Gal(P_{\lambda_0}/\Q)$ is isomorphic to a subgroup of~$G$ (it is the decomposition group of a place above~$\lambda_0$). In particular, $S_{10}\times C_2 = \Gal(P_1/\Q) \hookrightarrow G$.

For the reverse inclusion, we observe that $P_\lambda$ factors over $\Q(\lambda, \sqrt{1+\lambda})$ into two conjugate degree-10 polynomials. (At $\lambda=1$, the endpoint value is $f_1(1)=\sqrt{2}$, which generalizes to $f_\lambda(1) = \sqrt{1+\lambda}$; the factorization over $\Q(\sqrt{2})$ at $\lambda=1$ becomes a factorization over $\Q(\lambda,\sqrt{1+\lambda})$ in general.) Thus $G$ preserves the block structure $\{10,10\}$, giving $G\hookrightarrow S_{10}\wr C_2$. Combined with the lower bound $S_{10}\times C_2\hookrightarrow G$, and noting that the only intermediate group between $S_{10}\times C_2$ and $S_{10}\wr C_2$ would require the block-interchanging element to act non-trivially on the symmetric groups (impossible since $\operatorname{Out}(S_{10})=1$ for $10\ne 6$), we conclude $G = S_{10}\times C_2$.

\emph{Step 3: Specialization.} By Hilbert's irreducibility theorem, for all $\lambda_0\in\Q$ outside a thin set~$T$, we have $\Gal(P_{\lambda_0}/\Q) = G = S_{10}\times C_2$.

A \emph{thin set} $T\subset\Q$ (in Serre's sense~\cite{serre1992}) is a set of Hilbert-exceptional specializations. Concretely, $T$~has density zero in the sense that $|T\cap [-N,N]|/2N \to 0$ as $N\to\infty$. It contains no interval, and it is contained in a finite union of images of rational curves of degree~$\ge 2$. In particular, the complement $\Q\setminus T$ is dense in~$\R$.
\end{proof}

\begin{remark}
The corollary for $L_p$ norms follows by the same argument applied to the family $(1 + \lambda t^p)^{1/p}$, provided one verifies irreducibility of the elimination polynomial at a single rational specialization (which we have done for $p=2$ at $\lambda = 1$ and for $p=3$). Each~$p$ requires a separate irreducibility check, but the structure of the proof is identical.
\end{remark}

\begin{corollary}
For $L_p$ norms $(1+t^p)^{1/p}$ with $p\ge 2$, the degree-2 equiripple constant is generically not expressible by radicals.
\end{corollary}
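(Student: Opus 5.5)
The plan is to deduce the corollary from Theorem~\ref{thm:hilbert} and the remark following it, working one exponent $p$ at a time. Fix $p\ge 2$ and consider the family $f_{\lambda}(t)=(1+\lambda t^p)^{1/p}$. I would algebraize the degree-2 equiripple system exactly as in Section~\ref{sec:main}. This means introducing $w_i$ with $w_i^p=1+\lambda t_i^p$ and $s$ with $s^p=1+\lambda$. Eliminating $c$, $w_2$, $t_2$, $w_1$, $s$ by the same resultant chain then produces $R_{p,\lambda}(x)\in\Q(\lambda)[x]$. The first interior alternation point is a root of $R_{p,\lambda}$. Let $P_{p,\lambda}$ be the irreducible factor over $\Q(\lambda)$ that has this root when specialized at $\lambda=1$. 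It exists because the root is a continuous real branch of the algebraic function, by uniqueness of the Chebyshev solution and the implicit function theorem. The Jacobian full-rank statement of Proposition~\ref{prop:coupling} is what makes the implicit function theorem applicable.

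\textbf{Step 1.} Show that $G_p=\Gal(P_{p,\lambda}/\Q(\lambda))$ is non-solvable. For $p=2$ this is Theorem~\ref{thm:absolute} combined with Step~2 of Theorem~\ref{thm:hilbert}. For general $p$, I would not attempt to identify $G_p$ exactly. I only need some specialization $\lambda_0$ with $\Gal(P_{p,\lambda_0}/\Q)$ non-solvable. This suffices because, at a separable specialization, the specialized Galois group embeds into $G_p$ as a decomposition group, and a subgroup of a solvable group is solvable. For $p=3$ I would take $\lambda_0=1$ and certify non-solvability of the degree-246 factor of Theorem~\ref{thm:L3}. The method is to factor modulo a few primes and read off Frobenius cycle types. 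A cycle type such as $(1,2,q)$ with $q$ prime gives, after powering, a transposition and a $q$-cycle, to which Jordan's theorem applies on a block or orbit. An alternative is to exhibit cycle types forcing a composition factor $A_m$ with $m\ge 5$.

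\textbf{Step 2.} Apply Hilbert's irreducibility theorem. For $\lambda_0\in\Q$ outside a thin set $T_p$, we have $\Gal(P_{p,\lambda_0}/\Q)\cong G_p$, which is non-solvable. Hence the equiripple constant for $f_{\lambda_0}$ is not expressible by radicals. Here ``generically'' is read as ``for all $\lambda_0$ outside a thin set'', and the $L_p$ norm itself is the specialization $\lambda_0=1$.

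\textbf{Main obstacle.} Uniformity in $p$ is the hard part. The argument above gives the corollary for each $p$ at which Step~1 has actually been certified, namely $p=2$ and $p=3$. For arbitrary $p$, I would first try to find a degeneration that forces a large symmetric group for all $p$ at once. One candidate is $\lambda\to 0$, where $f_\lambda\approx 1+\lambda t^p/p$ and the equiripple problem tends to approximating $t^p$ by quadratics. Another is to track the monodromy of the branch points of $P_{p,\lambda}$ in $\lambda$. Failing that, I would state the corollary conditionally on the single-specialization irreducibility and non-solvability check for each $p$, as the remark preceding the corollary indicates.
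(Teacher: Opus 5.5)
Your argument follows the paper's own route. The remark preceding the corollary reruns the proof of Theorem~\ref{thm:hilbert} on the family $(1+\lambda t^p)^{1/p}$, one $p$ at a time, subject to a check at a single rational specialization. But you have the right check and the paper does not.

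The remark asks only for irreducibility at one specialization. That gives transitivity of the generic group and nothing more: the $C_4$ and $D_4$ of Section~\ref{sec:deg1} are transitive and solvable. For $p=2$, the proof of Theorem~\ref{thm:hilbert} in fact runs on the non-solvable group supplied by Theorem~\ref{thm:absolute}. Your requirement is the correct hypothesis: one specialization with a non-solvable Galois group, transferred to $G_p$ because subgroups of solvable groups are solvable. It uses only the embedding half of Step~2 of Theorem~\ref{thm:hilbert}. So it does not depend on the identification $G=S_{10}\times C_2$, which the $\operatorname{Out}(S_{10})=1$ remark does not establish.

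You can also drop the separability caveat. Suppose the leading coefficient does not vanish at $\lambda_0$. Then the residue field at a prime above $\lambda-\lambda_0$ contains the splitting field of the specialization, and its Galois group is a quotient of the decomposition group. Hence the group of the degree-246 factor is a subquotient of $G_3$ in any case, which is enough for non-solvability.

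The genuine gap, which you name yourself, is that this does not prove the statement as written, which claims every $p\ge 2$. Neither does the paper's argument, which is equally per-$p$ and conditional on a computation.
\begin{itemize}
\item For $p=2$ the argument is complete.
\item For $p=3$ the paper has only irreducibility of the degree-246 factor, and its Galois group is merely conjectured. Your Frobenius certification is therefore a missing ingredient rather than a formality. Once done at $\lambda_0=1$, it settles the $L_3$ constant itself, not just a generic member of the family.
\item For $p\ge 4$ nothing has been computed; the paper calls the $L_4$ elimination infeasible.
\end{itemize}
Your uniform ideas are not yet arguments. A degeneration such as $\lambda\to 0$ yields only a local monodromy (inertia) element, hence a cyclic subgroup. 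Assembling such elements into a non-solvable group uniformly in $p$ is exactly the kind of monodromy problem the paper leaves open. So at present the corollary is established for $p=2$, and for $p=3$ after your computation; your conditional formulation is the accurate one.

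A smaller point concerns the implicit-function argument. It is not needed for the existence of $P_{p,\lambda}$, since some factor of $R_{p,\lambda}$ must vanish at the specialized root. It is needed to know that the equiripple constant of $f_{\lambda_0}$ is a root of $P_{p,\lambda_0}$ for the $\lambda_0$ that Hilbert's theorem hands you. It only gives this near $\lambda=1$, and the Jacobian check of Proposition~\ref{prop:coupling} covers only $p=2$. So either restrict to $\lambda_0$ near $1$, or show that the configuration stays nondegenerate for all $\lambda>0$.
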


\section{Arithmetic structure of the polynomials}\label{sec:arithmetic}

\subsection{Polynomial properties of $Q(x)$}
The polynomial $Q(x)$ of Theorem~\ref{thm:absolute} has the following arithmetic properties:
\begin{itemize}
  \item The leading coefficient is $2^{16}=65536$ and the constant term is $-7^3=-343$.
  \item The coefficient of $x^{19}$ vanishes, meaning that the sum of all 20 roots is zero (the trace of the number field extension is zero).
  \item The polynomial has signature $(2,9)$: it has exactly 2 real roots ($k\approx 0.2592$ and $k'\approx -0.1614$) and 9~pairs of complex conjugate roots.
  \item The polynomial discriminant is $-2^{304}\cdot 5^8\cdot 7^3\cdot 13^2\cdot 59^6\cdot 1609^3\cdot 3485687\cdot (\text{larger primes})^2$.
  \item The field discriminant (which measures the ramification of the corresponding number field) is $-2^{54}\cdot 7\cdot 59^2\cdot 1609\cdot 3485687$.
  \item A search of the LMFDB database~\cite{lmfdb} finds no match; this may be the first known degree-20 number field with Galois group $S_{10}\times C_2$ and signature~$(2,9)$.
\end{itemize}

\subsection{Newton polygons}
The 2-adic Newton polygon of $Q(x)$ has vertices $(0,0)$, $(8,0)$, $(12,2)$, $(20,16)$, as shown in Figure~\ref{fig:newton}. Its three segments reveal three families of roots grouped by their 2-adic valuation:
\begin{itemize}
  \item 8 roots with $v_2=0$ (2-adic units).
  \item 4 roots with $v_2=-1/2$.
  \item 8 roots with $v_2=-7/4$.
\end{itemize}
The 7-adic polygon has vertices $(0,3)$, $(2,0)$, $(20,0)$: exactly 2~roots have $v_7=3/2$.

\begin{figure}[ht!]
\centering
\begin{tikzpicture}[scale=0.32]
  \draw[gray!30, thin] (0,0) grid[step=4] (20,16);
  \draw[-{Stealth}, thick] (-0.5,0) -- (21,0) node[right]{\footnotesize $i$};
  \draw[-{Stealth}, thick] (0,-0.5) -- (0,17) node[above]{\footnotesize $v_2(a_i)$};
  \foreach \x/\y in {0/0, 1/5, 2/1, 3/3, 4/1, 5/3, 6/2, 7/3, 8/0, 9/3, 10/2, 11/4, 12/2, 13/6, 14/7, 15/10, 16/12, 17/13, 18/14, 20/16} {
    \fill[blue!50!black] (\x,\y) circle (4pt);
  }
  \draw[red!70!black, very thick] (0,0) -- (8,0) -- (12,2) -- (20,16);
  \node[red!70!black, font=\footnotesize] at (4,-1.2) {slope $0$};
  \node[red!70!black, font=\footnotesize] at (10,-1.2) {slope $\frac{1}{2}$};
  \node[red!70!black, font=\footnotesize, rotate=60] at (17,10.5) {slope $\frac{7}{4}$};
  \draw[{Stealth}-{Stealth}, thin, gray!70!black] (0,-2.5) -- (8,-2.5) node[midway, below, font=\scriptsize]{8 roots};
  \draw[{Stealth}-{Stealth}, thin, gray!70!black] (8,-2.5) -- (12,-2.5) node[midway, below, font=\scriptsize]{4};
  \draw[{Stealth}-{Stealth}, thin, gray!70!black] (12,-2.5) -- (20,-2.5) node[midway, below, font=\scriptsize]{8 roots};
\end{tikzpicture}
\caption{The 2-adic Newton polygon of $Q(x)$. Each point $(i, v_2(a_i))$ marks the 2-adic valuation of the $i$-th coefficient. The lower convex hull (thick red) has three segments with slopes $0$, $1/2$, and $7/4$, corresponding to three families of roots with $v_2 = 0$, $-1/2$, and $-7/4$ respectively. The widths of the segments give the number of roots in each family.}
\label{fig:newton}
\end{figure}

\subsection{Geometric interpretation of the 20 roots}
Of the 20~roots, only $k\approx 0.2592$ satisfies the geometric constraint $t_1\in[0,1]$ and gives a valid equiripple configuration. The second real root $k'\approx -0.1614$ lies outside the interval $[0,1]$. The remaining 18~roots are complex; they are extraneous solutions introduced by the algebraization process (each squaring step, used to eliminate a square root, doubles the number of solutions and introduces branches with inconsistent signs). Figure~\ref{fig:roots} displays all 20 roots in the complex plane.

\begin{figure}[ht!]
\centering
\begin{tikzpicture}[scale=1.25]
    \clip (-1.5, -0.4) rectangle (1.5, 3);
  \draw[-{Stealth}, gray] (-1.1,0) -- (1.1,0) node[right, font=\footnotesize]{$\operatorname{Re}$};
  \draw[-{Stealth}, gray] (0,-2.6) -- (0,2.6) node[above, font=\footnotesize]{$\operatorname{Im}$};
  \fill[red!70!black] (0.2592, 0) circle (2pt);
  \fill[red!70!black] (-0.1614, 0) circle (2pt);
  \node[red!70!black, font=\scriptsize, above right] at (0.2592, 0.05) {$k$};
  \node[red!70!black, font=\scriptsize, above left] at (-0.1614, 0.05) {$k'$};
  \foreach \x/\y in {
    -0.099/-0.230,
    -0.638/-0.482,
    0.081/-0.516,
    0.408/-0.563,
    0.264/-0.981,
    0.007/-0.994,
    0.016/-1.012,
    0.680/-1.854,
    -0.770/-2.288} {
    \fill[blue!30!white] (\x, \y) circle (1.5pt);
    \fill[blue!60!black] (\x, -\y) circle (1.5pt);
  }
  \draw[gray!40, thin] (0,0) circle (1);
  \draw[red!70!black, ultra thick] (0,0) -- (1,0);
  \node[font=\scriptsize, red!70!black, below] at (0.5,-0.06) {$[0,1]$};
\end{tikzpicture}
\caption{All 20 roots of $Q(x)$ in the complex plane (10 plotted, ten are their mirror image $y \mapsto -y$). The two real roots (red) are $k\approx 0.2592$ (the physical solution in $[0,1]$) and $k'\approx -0.1614$ (parasitic). The 18 complex roots (blue, 9 conjugate pairs) correspond to non-physical solutions of the algebraized equiripple system. The unit circle is shown for scale.}
\label{fig:roots}
\end{figure}

The second real root $k'\approx -0.1614$ is parasitic: substituting $t_1 = k'$ into the equiripple system yields no valid configuration with both $t_1$ and $t_2$ in $[0,1]$. It arises from a branch of the algebraized system in which the signs of the square roots are inconsistent with the original problem.

\begin{remark}
The Newton polygon of a polynomial $\sum a_i x^i$ at a prime~$p$ is the lower convex hull of the points $(i, v_p(a_i))$, where $v_p(a_i)$ denotes the $p$-adic valuation (the exponent of~$p$ dividing~$a_i$). By a theorem of Ore, the slopes of this polygon determine how the roots distribute by $p$-adic size. A segment of width~$w$ and slope~$s$ corresponds to exactly~$w$ roots with $p$-adic valuation~$-s$. Saying \enquote{8 roots with $v_2 = 0$} means that, in the 2-adic completion~$\Q_2$, eight roots of $Q(x)$ have absolute value~$|r|_2 = 2^0 = 1$ (they are 2-adic units). The four roots with $v_2 = -1/2$ have $|r|_2 = 2^{1/2}$ (they live in a ramified extension of~$\Q_2$), and the eight with $v_2 = -7/4$ are even further from the 2-adic integers.
\end{remark}

\section{Numerical efficiency of the approximation family}\label{sec:efficiency}

We now compute optimal equiripple polynomials of degree $n=1,\ldots,8$ for $\sqrt{1+t^2}$ on $[0,1]$ and analyze the rate at which accuracy improves with polynomial degree.

\begin{table}[t]
\centering
\small
\caption{Equiripple error and efficiency for $\sqrt{1+t^2}$ on $[0,1]$}
\label{tab:efficiency}
\begin{tabular*}{\textwidth}{@{\extracolsep{\fill}}cccc@{}}
\toprule
Degree $n$ & Max.\ abs.\ error & Bits of accuracy & Bits/degree\\
\midrule
1 & $4.49\times 10^{-2}$ & 4.5 & 4.5\\
2 & $4.04\times 10^{-3}$ & 8.0 & 4.0\\
3 & $1.28\times 10^{-4}$ & 12.9 & 4.3\\
4 & $8.55\times 10^{-5}$ & 13.5 & 3.4\\
5 & $9.90\times 10^{-6}$ & 16.6 & 3.3\\
6 & $1.05\times 10^{-6}$ & 19.9 & 3.3\\
7 & $4.20\times 10^{-7}$ & 21.2 & 3.0\\
8 & $3.68\times 10^{-8}$ & 24.7 & 3.1\\
\bottomrule
\end{tabular*}
\end{table}

The error decays as $O(\rho^{-n})$ where $\rho$ is the Bernstein ellipse parameter, determined by the nearest singularity of $\sqrt{1+t^2}$ in the complex plane (at $t=\pm i$). Figure~\ref{fig:bernstein} illustrates this geometry. After transforming $[0,1]$ to $[-1,1]$:
\[
  \rho = |(-1+2i) + \sqrt{(-1+2i)^2-1}| \approx 4.612\,.
\]
The theoretical asymptotic efficiency is $\log_2(\rho)\approx 2.21$~bits per degree. The observed values (3.0 to~4.5) exceed this at low~$n$ due to the pre-asymptotic regime.

For other $L_p$ norms, the Bernstein parameter is smaller: $\rho_3\approx 3.73$, $\rho_4\approx 3.22$, $\rho_5\approx 2.89$. This reflects the fact that the nearest singularity of $(1+t^p)^{1/p}$, located at $t=e^{i\pi/p}$, moves closer to the real interval $[0,1]$ as $p$ increases, making the function harder to approximate by polynomials.

\begin{figure}[ht!]
\centering
\begin{tikzpicture}[scale=1.8]
  \draw[green!50!black, thick, dashed] (0.5,0) ellipse (1.20 and 1.09);
  \draw[-{Stealth}, gray] (-1.4,0) -- (1.6,0) node[right, font=\footnotesize]{$\operatorname{Re}$};
  \draw[-{Stealth}, gray] (0,-1.5) -- (0,1.5) node[above, font=\footnotesize]{$\operatorname{Im}$};
  \draw[blue!70!black, ultra thick] (0,0) -- (1,0);
  \node[below, font=\footnotesize, blue!70!black] at (0.5, -0.08) {$[0,1]$};
  \fill[red!70!black] (0, 1) circle (2pt) node[right, font=\footnotesize]{$t=i$};
  \fill[red!70!black] (0,-1) circle (2pt) node[right, font=\footnotesize]{$t=-i$};
  \node[green!50!black, font=\footnotesize] at (0.8, 0.4) {Bernstein ellipse};
  \draw[{Stealth}-{Stealth}, red!60!black, thin] (0.02,0.05) -- (0.02,0.95);
  \node[red!60!black, font=\scriptsize, left] at (-0.02,0.5) {dist.\ $=1$};
  \fill[blue!70!black] (0,0) circle (1.5pt);
  \fill[blue!70!black] (1,0) circle (1.5pt);
\end{tikzpicture}
\caption{The Bernstein ellipse for $\sqrt{1+t^2}$ on $[0,1]$ in the complex $t$-plane. The singularities at $t=\pm i$ (distance~1 from the interval) determine the convergence rate: the largest ellipse with foci at the interval endpoints that avoids the singularities has parameter $\rho\approx 4.612$, giving an asymptotic efficiency of $\log_2(\rho) \approx 2.21$ bits of accuracy per polynomial degree.}
\label{fig:bernstein}
\end{figure}

The \emph{Bernstein ellipse} of parameter~$\rho$ for an interval $[a,b]$ is the image under the affine map $[-1,1]\to[a,b]$ of the ellipse $\{(z+z^{-1})/2 : |z|=\rho\}$ in the complex plane (see Trefethen~\cite{trefethen2019} for a modern treatment). Its semi-axes are $(\rho+\rho^{-1})/2$ and $(\rho-\rho^{-1})/2$ in the standardized $[-1,1]$ picture. The key theorem of Bernstein (1912) states: if $f$ is analytic inside the Bernstein ellipse of parameter~$\rho$, then the best polynomial approximation of degree~$n$ satisfies $\|f - p_n^*\|_\infty \le C\rho^{-n}$. Conversely, $\rho$ is determined by the nearest singularity: it is the largest parameter such that $f$ extends analytically to the interior of the ellipse.

For $f(t) = \sqrt{1+t^2}$ on $[0,1]$, the singularities lie at $t=\pm i$. Mapping $[0,1]$ to $[-1,1]$ via $u = 2t-1$, the singularity at $t=i$ maps to $u = -1+2i$. The Bernstein parameter is $\rho = |u_0 + \sqrt{u_0^2 - 1}|$ where $u_0 = -1+2i$, giving $\rho\approx 4.612$.

\paragraph{Parity anomaly.}
The gains from degree~3$\to$4 and~6$\to$7 in Table~\ref{tab:efficiency} are anomalously small ($\sim$0.6 bits vs.\ $\sim$3 bits expected). This occurs because the Chebyshev expansion of $\sqrt{1+t^2}$ on $[0,1]$ has small odd-indexed coefficients relative to even-indexed ones: the function is well-approximated by a quadratic, so its deviation from a low-degree even polynomial is small. Consequently, adding an odd-degree monomial to the approximating polynomial captures less of the remaining error than adding an even-degree monomial. The effect weakens with increasing~$n$ as the asymptotic regime $\rho^{-n}$ dominates.

\section{Piecewise approximation with optimal breakpoints}\label{sec:piecewise}

In practice, one has access to comparisons as well as arithmetic. Since $t=y/x\in[0,1]$ is already computed, testing whether $t$ lies in one of $k$~predetermined subintervals costs only $k-1$ comparisons, essentially free in hardware. One can then apply a different optimal polynomial on each subinterval. The natural question is: \emph{how should the breakpoints be chosen, and what accuracy is gained?}

\subsection{The equal-error optimality condition}

\begin{proposition}\label{prop:piecewise}
Let $f\colon[0,1]\to\R$ be continuous and let $\mathcal{P}_n$ denote the space of polynomials of degree~$\le n$. For a partition $0 = b_0 < b_1 < \cdots < b_k = 1$, define
\[
  E(b_0,\ldots,b_k) = \max_{0\le i < k}\, \min_{p\in\mathcal{P}_n} \|p - f\|_{\infty,[b_i,b_{i+1}]}\,.
\]
Then the partition minimizing $E$ satisfies the \enquote{equal-error condition}:
\[
  E_n^*([b_0,b_1]) = E_n^*([b_1,b_2]) = \cdots = E_n^*([b_{k-1},b_k]),
\]
where $E_n^*([a,b])$ denotes the minimax error of degree-$n$ approximation on $[a,b]$.
\end{proposition}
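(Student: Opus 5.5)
The plan is to reduce the statement to two qualitative properties of the one-interval minimax error $E_n^*([a,b])$, viewed as a function of its endpoints, and then use an exchange (balancing) argument on the breakpoints.

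\emph{Monotonicity.} If $[a,b]\subseteq[a',b']$, then $E_n^*([a,b])\le E_n^*([a',b'])$. This holds because the best polynomial on the larger interval is admissible on the smaller one.

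\emph{Continuity.} The map $(a,b)\mapsto E_n^*([a,b])$ is continuous on $\{a<b\}$, and $E_n^*([a,b])\to 0$ as $b-a\to 0$. To prove this, pull back to $[0,1]$ by the affine map $u\mapsto a+(b-a)u$, under which $\mathcal{P}_n$ is invariant. Then $E_n^*([a,b])=\operatorname{dist}(f_{a,b},\mathcal{P}_n)$ in $C[0,1]$, where $f_{a,b}(u)=f(a+(b-a)u)$. The map $(a,b)\mapsto f_{a,b}$ is continuous into $C[0,1]$ by uniform continuity of $f$, and the distance to a subspace is $1$-Lipschitz. The limit as $b-a\to0$ follows by approximating $f$ with constants, again using uniform continuity.

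With these two facts, $E$ is continuous on the compact simplex of partitions $0=b_0\le b_1\le\cdots\le b_k=1$, so a minimizer exists. Degenerate pieces contribute $0$ and can be ruled out a posteriori.

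\emph{Balancing argument.} Take a minimizer, with optimal value $E^\ast$. Among all minimizers, choose one for which the number of pieces attaining $E^\ast$ is smallest. Suppose not all pieces have equal error. Then there are adjacent pieces $[b_{i-1},b_i]$ and $[b_i,b_{i+1}]$ such that the first has error $E^\ast$ and the second has error strictly smaller, or the symmetric configuration.
\begin{itemize}
\item Move $b_i$ slightly toward $b_{i-1}$.
\item By continuity, the neighbour's error stays below $E^\ast$ for a small enough move.
\item If the maximal piece's error strictly decreases, we either lower $E$, contradicting optimality, or reduce the number of maximal pieces, contradicting the choice of minimizer.
\end{itemize}
An induction along the chain of pieces propagates this until all errors coincide. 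Alternatively, one can run a continuity and intermediate-value argument: define the breakpoints greedily, with $b_{i+1}$ the largest point such that $E_n^*([b_i,b_{i+1}])\le \eta$, and tune $\eta$ so that the last piece ends exactly at $1$ with error $\eta$.

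\emph{Main obstacle: strict monotonicity.} The difficult step is the strict decrease of $E_n^*([a,b])$ when one endpoint is moved inward. This can fail for general continuous $f$. For example, if $f$ coincides with a polynomial of degree $\le n$ on a subinterval, some pieces can have error $0$ at an optimal partition. So the proposition should be read as follows: an optimal partition satisfying the equal-error condition exists, and every optimal partition does when strict monotonicity holds.

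For the functions of interest, such as $\sqrt{1+t^2}$, I would first try to derive strict monotonicity from Chebyshev's alternation theorem together with uniqueness. Suppose $E_n^*([a,b'])=E_n^*([a,b])$ with $b'>b$. Then the best approximant $p$ on $[a,b']$ is also best on $[a,b]$, and the same holds on every intermediate $[a,s]$. Hence $f-p$ would have $n+2$ alternation points inside $[a,b]$ and satisfy $|f-p|<E$ on a neighbourhood of $b'$.

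To obtain a contradiction for analytic non-polynomial $f$, I would show that at least one endpoint of the interval must be an alternation point. For $f$ with $f^{(n+1)}$ of constant sign, which holds for $\sqrt{1+t^2}$ on $[0,1]$ in the degrees considered, this follows from a Rolle-type count on $(f-p)'$: it has at most $n+1$ zeros in the interior, which forces both endpoints to be extremal. If that argument does not close, I would fall back on the existence version above, which needs only continuity.
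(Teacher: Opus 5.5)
Your balancing step is the paper's argument (move the shared breakpoint toward the piece with larger error), but done properly. The paper's one-liner does not deal with the enlarged neighbour, which needs continuity, or with ties among several maximal pieces. Your choice of a minimizer with the fewest maximal pieces, together with continuity, repairs both.

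Your diagnosis is also correct. The paper's proof rests on the parenthetical claim that shrinking an interval strictly decreases $E_n^*$. That claim is false for general continuous $f$, and the proposition fails in that generality. Concretely, take $k=2$, $f=0$ on $[0,1/2]$ and $f(t)=\sin((4n+6)\pi(t-1/2))$ on $[1/2,1]$. Then $f=\pm1$ alternately at $2n+3$ points $x_0<\cdots<x_{2n+2}$ of $(1/2,1)$. For any breakpoint, one piece contains $n+2$ consecutive $x_j$, so its error is $\ge 1$ by de la Vall\'ee Poussin, while $p=0$ gives error $\le 1$. Hence $E^*=1$, and every $b_1\in(0,1/2]$ is a minimizer with errors $(0,1)$.

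The genuine gap is in your fallback claim that an equal-error minimizer always exists. The balancing argument gives nothing without strict monotonicity. The greedy ``tune $\eta$'' sketch fails exactly in the degenerate cases where you need it. In the example above, for every $\eta<1$ the greedy breakpoints satisfy $b_1<x_{n+1}$ and then $b_2<1$, since otherwise a piece would contain $n+2$ alternation points. For $\eta=1$, however, already $b_1=1$. So the endpoint reached jumps, and no $\eta$ yields two pieces of error $\eta$, even though $b_1=x_{n+1}$ is an equal-error minimizer.

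A correct route uses only the weak monotonicity and continuity you proved, by induction on $k$. Let $g(x)=E_n^*([0,x])$ and let $V(x)$ be the optimal $(k-1)$-piece value on $[x,1]$, so that $E^*=\min_x\max(g(x),V(x))$.
\begin{itemize}
\item $g$ is continuous and nondecreasing.
\item $V$ is nonincreasing: clip a partition of $[x,1]$ at $x'>x$.
\item $V$ is continuous: after rescaling $[x,1]$ to $[0,1]$, it is a minimum over a fixed compact simplex of a jointly continuous function.
\end{itemize}
The intermediate value theorem gives $x_c$ with $g(x_c)=V(x_c)=e$. Monotonicity gives $\max(g(x),V(x))\ge e$ for all $x$, so $e=E^*$. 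The induction hypothesis applied to $[x_c,1]$ finishes the argument.

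Three smaller points in your strict-monotonicity sketch:
\begin{itemize}
\item If $f^{(n+1)}\ne 0$ on the open interval, Rolle gives at most $n$ (not $n+1$) interior zeros of $(f-p)'$. It is this count that forces both endpoints into the alternation set. ``At least one endpoint'' is not enough, since you need the moving one.
\item The contradiction is then that $|f(s)-p(s)|=E$ for every $s\in[b,b']$, so $f$ agrees with a polynomial of degree $\le n$ there. It is not that $|f-p|<E$ near $b'$.
\item For $\sqrt{1+t^2}$ the constant-sign hypothesis holds for $n=1,2$ but not for $n=3$: $f^{(4)}(t)=3(4t^2-1)(1+t^2)^{-7/2}$ vanishes at $t=1/2$.
\end{itemize}
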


\begin{proof}
If two adjacent intervals had unequal minimax errors, one could shift the shared breakpoint toward the interval with larger error, reducing the maximum without increasing any other subinterval error (since reducing an interval's width strictly decreases its minimax error for a fixed-degree polynomial). This contradicts optimality.
\end{proof}

The optimal breakpoints and per-subinterval polynomials are thus determined \emph{simultaneously}: the breakpoints equalize errors, and the polynomials are the Chebyshev equiripple solutions on each resulting subinterval.

\subsection{Theoretical error scaling}

The gain from subdivision admits a precise asymptotic:

\begin{proposition}\label{prop:scaling}
Let $f$ be analytic on $[0,1]$ with nearest singularity at distance~$\delta$ from the interval (in the sense of the Bernstein ellipse). For the optimal $k$-piece degree-$n$ partition of $[0,1]$:
\[
  E(k,n) = O\!\left(\rho_k^{-n}\right) \quad\text{where}\quad \rho_k \approx \rho_1^{k^{1+o(1)}}
\]
and, more precisely, each doubling of~$k$ reduces the error by a factor of approximately~$2^{n+1}$, i.e., gains $n+1$ bits of accuracy.
\end{proposition}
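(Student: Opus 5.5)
The argument rests on one idea: when a subinterval is short, the best approximation error is controlled by the Taylor remainder rather than by the global Bernstein ellipse. I will first prove a local scaling lemma for the minimax error on a short interval, and then combine it with the equal-error condition of Proposition~\ref{prop:piecewise}.

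\emph{Local scaling lemma.} Let $I=[c-h/2,c+h/2]\subset[0,1]$ with $h$ small. Write $u\in[-1,1]$ for the affine coordinate $t=c+hu/2$. Then $f$ restricted to $I$ becomes $g(u)=f(c+hu/2)$. Expanding $g$ in a Taylor series about $u=0$ and using the classical fact that the best uniform approximation of $u^{n+1}$ on $[-1,1]$ by polynomials of degree $\le n$ has error $2^{-n}$ (the leading coefficient of the monic Chebyshev polynomial), I expect
\[
  E_n^*(I) = \frac{|f^{(n+1)}(c)|}{(n+1)!}\,\frac{h^{n+1}}{2^{2n+1}}\bigl(1+O(h)\bigr),
\]
whenever $f^{(n+1)}(c)\neq 0$. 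The $O(h)$ term is controlled by the Cauchy estimates for the higher Taylor coefficients. These are finite uniformly on $[0,1]$ because $f$ is analytic in a neighbourhood of radius about $\delta$. The upper bound comes from the Taylor polynomial plus the Chebyshev correction. The matching lower bound comes from de la Vallée Poussin's theorem applied at the $n+2$ Chebyshev extrema of $T_{n+1}$.

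\emph{Equal-error partition.} By Proposition~\ref{prop:piecewise}, the optimal $k$-piece partition has equal errors $E$ on every piece. The lemma then gives $h_i\approx (C_n E)^{1/(n+1)}\,|f^{(n+1)}(c_i)|^{-1/(n+1)}$. Summing the $h_i$ to $1$ and passing to a Riemann sum yields
\[
  E(k,n)\sim C_n\Bigl(\int_0^1 |f^{(n+1)}(t)|^{1/(n+1)}\,dt\Bigr)^{n+1} k^{-(n+1)},
\]
so doubling $k$ divides $E$ by $2^{n+1}$, which is a gain of $n+1$ bits. Rewriting $k^{-(n+1)}=\rho_k^{-n}$ gives $\rho_k=k^{(n+1)/n}\cdot(\text{const})^{1/n}$. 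I therefore expect to have to reinterpret the stated form $\rho_k\approx\rho_1^{k^{1+o(1)}}$ loosely. The provable content is the $2^{n+1}$-per-doubling law, and I would state the $\rho_k$ expression only as a heuristic regime description, valid for moderate $k$.

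\emph{Main obstacle.} The hardest step is making the Riemann-sum passage rigorous when $f^{(n+1)}$ vanishes somewhere on $[0,1]$. The $1/(n+1)$-power density is still integrable there, but the local lemma degenerates at such points. I would handle this as follows. Near a zero of $f^{(n+1)}$, use a cruder upper bound from the next nonvanishing Taylor term, which only makes the interval's error smaller. Show that these intervals contribute $o(1)$ to the total length. Then sandwich $E(k,n)$ between two explicit partitions, one built from the limiting density and one obtained by perturbing it. This gives the asymptotic with $1+o(1)$ factors as $k\to\infty$, and hence the claimed factor $2^{n+1}(1+o(1))$ per doubling.
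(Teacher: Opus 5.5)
Your argument is correct for the part of the statement that is true, and it takes a genuinely different route from the paper.

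\textbf{The paper's route.} The paper puts a Bernstein ellipse on each piece. With $h=1/k$ it takes $\rho_{\mathrm{sub}}\approx 2k\delta$ and asserts an error $O(\rho_{\mathrm{sub}}^{-n})=O(k^{-n})$. That is only $n$ bits per doubling, and only an upper bound. It then attributes the missing bit to the non-uniformity of the optimal partition.

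\textbf{Your route.} You prove the local two-sided law $E_n^*(I)\sim\frac{|f^{(n+1)}(c)|}{(n+1)!}\frac{|I|^{n+1}}{2^{2n+1}}$ and then equidistribute $|f^{(n+1)}|^{1/(n+1)}$. This gives $E(k,n)\sim\frac{1}{(n+1)!\,2^{2n+1}}\bigl(\int_0^1|f^{(n+1)}|^{1/(n+1)}\,dt\bigr)^{n+1}k^{-(n+1)}$. (Your two uses of $C_n$ are reciprocals of each other.)

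\textbf{What your route buys.} It supplies two things the paper's proof lacks.
\begin{itemize}
\item A lower bound. An $O(\cdot)$ estimate alone cannot show that doubling $k$ divides the error by \emph{approximately} $2^{n+1}$.
\item The right mechanism. The $(n+1)$-st bit comes from the $h^{n+1}$ scaling, which even a uniform partition achieves. Non-uniformity only improves the constant, from $\max|f^{(n+1)}|$ to $\bigl(\int|f^{(n+1)}|^{1/(n+1)}\bigr)^{n+1}$. The paper loses that power of $h$ because it treats the constant in Bernstein's bound $2M\rho^{-n}/(\rho-1)$ as independent of $h$. This discards the factor $1/(\rho_{\mathrm{sub}}-1)=O(h/\delta)$, so its explanation of the extra bit is not correct.
\end{itemize}

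\textbf{What the ellipse route buys.} It remains the natural tool for $n\to\infty$ at fixed $k$; that is where $\rho_1$ comes from. With the lost factor restored, it gives the upper rate $O(k^{-(n+1)})$ in one line. Your local route is the natural one for $k\to\infty$ at fixed $n$. It needs only $C^{n+2}$ smoothness and gives the sharp constant.

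\textbf{Zeros of $f^{(n+1)}$.} These do occur here: $f'''(0)=0$ for $\sqrt{1+t^2}$. A classical inequality simplifies this step:
$\frac{\min_I|f^{(n+1)}|}{(n+1)!}\frac{|I|^{n+1}}{2^{2n+1}}\le E_n^*(I)\le\frac{\max_I|f^{(n+1)}|}{(n+1)!}\frac{|I|^{n+1}}{2^{2n+1}}$.
It holds on every interval. The upper bound comes from interpolation at Chebyshev nodes, and the lower bound from the divided difference at the extrema of $T_{n+1}$. It gives your lower bound directly via lower Darboux sums, with no Cauchy estimates and no special case. Only the upper-bound construction then needs the extra care near zeros that you describe.

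\textbf{The $\rho_k$ clause.} Do not keep $\rho_k\approx\rho_1^{k^{1+o(1)}}$ as a ``heuristic for moderate $k$''. It is false, so neither your argument nor the paper's can establish it.
\begin{itemize}
\item Your own lower bound $E(k,n)\ge c\,k^{-(n+1)}$ is incompatible with $E(k,n)=O(\rho_k^{-n})$ when $\rho_k$ grows exponentially in $k$.
\item The paper's own $\rho_{\mathrm{sub}}\approx 2k\delta$ is linear in $k$.
\item Numerically, for $n=1$ and $k=16$, $\rho_1^{-16}\approx 2\times10^{-11}$, whereas the tabulated error is about $2\times10^{-4}$.
\end{itemize}
The correct form is $E(k,n)\asymp k^{-(n+1)}$, i.e.\ $\rho_k\asymp k^{(n+1)/n}$, which is what you derived.
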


\begin{proof}
On a subinterval of width~$h=1/k$, the nearest singularity is at distance at least~$\delta$ (the global singularity distance does not decrease). The Bernstein ellipse parameter for a subinterval of width~$h$ with singularity at distance~$\delta$ is
\[
  \rho_{\text{sub}} = \frac{\delta}{h/2} + \sqrt{\left(\frac{\delta}{h/2}\right)^2 - 1} \approx \frac{2\delta}{h} = 2k\delta
\]
for $k\delta\gg 1$. The minimax error on this subinterval is $O(\rho_{\text{sub}}^{-n}) = O((2k\delta)^{-n})$. Since the total error equals the per-subinterval error (by the equal-error condition), we have
\[
  E(k,n) = O(k^{-n})\,.
\]
Doubling~$k$ therefore multiplies the error by $(1/2)^n$, which corresponds to a gain of $n$ bits of accuracy. The observed gain is slightly larger (approximately $n+1$ bits) because the optimal partition is not uniform: it concentrates narrower intervals near $t=0$ where the second derivative $f''(t) = (1+t^2)^{-3/2}$ is largest, effectively gaining one additional order from the non-uniform distribution.
\end{proof}

\subsection{Computational results}

Table~\ref{tab:piecewise} presents the minimax error for degree-$n$ piecewise approximation of $\sqrt{1+t^2}$ with jointly optimized breakpoints.

\begin{table}[t]
\centering
\small
\caption{Bits of accuracy for degree-$n$ piecewise approximation on $k$~subintervals. Each entry shows $-\log_2(\text{max error})$. Breakpoints are chosen to equalize subinterval errors.}
\label{tab:piecewise}
\begin{tabular*}{\textwidth}{@{\extracolsep{\fill}}c*{8}{c}@{}}
\toprule
$n\backslash k$ & 1 & 2 & 3 & 4 & 6 & 8 & 12 & 16\\
\midrule
1 & 4.3 & 6.4 & 7.6 & 8.5 & 9.7 & 10.5 & 11.6 & 12.3\\
2 & 7.9 & 11.0 & 12.9 & 14.1 & 15.9 & 17.2 & 19.0 & 20.2\\
3 & 11.9 & 15.2 & 17.9 & 19.5 & 21.9 & 23.7 & 25.9 & 27.4\\
\bottomrule
\end{tabular*}
\end{table}

The data confirms the theoretical prediction: each doubling of~$k$ gains approximately $n+1$ bits (about 2 for linear, 3 for quadratic, 4 for cubic). Figure~\ref{fig:piecewise_gain} illustrates the gain per doubling.

\begin{figure}[t]
\centering
\begin{tikzpicture}
\begin{axis}[
  width=0.85\linewidth, height=5cm,
  xlabel={Number of subintervals $k$},
  ylabel={Bits of accuracy},
  xmin=0.8, xmax=17,
  ymin=3, ymax=28,
  xtick={1,2,4,8,16},
  xmode=log,
  log basis x=2,
  legend pos=south east,
  legend style={font=\footnotesize},
  grid=major,
  grid style={gray!20},
]
\addplot[blue!70!black, thick, mark=*, mark size=1.5pt] coordinates {
  (1,4.3) (2,6.4) (3,7.6) (4,8.5) (6,9.7) (8,10.5) (12,11.6) (16,12.3)
};
\addlegendentry{$n=1$}
\addplot[red!70!black, thick, mark=square*, mark size=1.5pt] coordinates {
  (1,7.9) (2,11.0) (3,12.9) (4,14.1) (6,15.9) (8,17.2) (12,19.0) (16,20.2)
};
\addlegendentry{$n=2$}
\addplot[green!50!black, thick, mark=triangle*, mark size=2pt] coordinates {
  (1,11.9) (2,15.2) (3,17.9) (4,19.5) (6,21.9) (8,23.7) (12,25.9) (16,27.4)
};
\addlegendentry{$n=3$}
\addplot[blue!70!black, dashed, thin, domain=1:16] {4.3 + 2*ln(x)/ln(2)};
\addplot[red!70!black, dashed, thin, domain=1:16] {7.9 + 3*ln(x)/ln(2)};
\addplot[green!50!black, dashed, thin, domain=1:16] {11.9 + 4*ln(x)/ln(2)};
\end{axis}
\end{tikzpicture}
\caption{Bits of accuracy vs.\ number of subintervals (log scale). Solid lines: computed optimal piecewise approximation. Dashed lines: the theoretical slope of $n+1$ bits per doubling of~$k$. The close agreement confirms Proposition~\ref{prop:scaling}.}
\label{fig:piecewise_gain}
\end{figure}

\subsection{Optimal breakpoint structure}

The optimal breakpoints are not uniformly spaced: they are denser near $t=0$, where the curvature $f''(t) = (1+t^2)^{-3/2}$ is maximal. For degree~1 with $k=4$ subintervals, the breakpoints are approximately $\{0, 0.21, 0.44, 0.69, 1\}$ (vs.\ the uniform $\{0, 0.25, 0.50, 0.75, 1\}$).

\subsection{Efficiency comparison}

Since comparisons are free, the arithmetic cost of piecewise approximation is identical to a single-interval polynomial: $n$~multiplications and 1~division (for $t=y/x$). Table~\ref{tab:piecewise_efficiency} compares configurations at the same operation count.

\begin{table}[t]
\centering
\small
\caption{Efficiency comparison: same arithmetic cost, different subdivision levels. Comparisons (for selecting the subinterval) are counted as free.}
\label{tab:piecewise_efficiency}
\begin{tabular*}{\textwidth}{@{\extracolsep{\fill}}lccc@{}}
\toprule
Configuration & Ops & Bits & Bits/op\\
\midrule
deg~1, $k=1$ (classical) & 2 & 4.3 & 2.1\\
deg~1, $k=4$ & 2 & 8.5 & 4.2\\
deg~1, $k=16$ & 2 & 12.3 & 6.2\\
\midrule
deg~2, $k=1$ (this paper) & 3 & 7.9 & 2.6\\
deg~2, $k=4$ & 3 & 14.1 & 4.7\\
deg~2, $k=16$ & 3 & 20.2 & 6.7\\
\midrule
deg~3, $k=1$ & 4 & 11.9 & 3.0\\
deg~3, $k=4$ & 4 & 19.5 & 4.9\\
deg~3, $k=16$ & 4 & 27.4 & 6.8\\
\bottomrule
\end{tabular*}
\end{table}

A striking consequence emerges: degree-1 linear approximation with 4~subintervals achieves 8.5~bits of accuracy while using only 2~arithmetic operations, surpassing the single-interval degree-2 quadratic (7.9~bits with 3~operations). More generally, subdivision with $k\ge 4$ subintervals at degree~$n$ always outperforms a single interval at degree~$n+1$, at strictly lower arithmetic cost.

\subsection{Algebraic implications}

The piecewise structure raises a natural question for the algebraic theory developed in earlier sections: what is the algebraic nature of the \emph{optimal breakpoints}?

For an algebraic target function $f$, the minimax error $E_n^*([a,b])$ on an interval $[a,b]$ is determined by the equiripple system, which is polynomial in the endpoints after algebraization. The optimal breakpoint $b_1$ for $k=2$ satisfies the equation $E_n^*([0,b_1]) = E_n^*([b_1,1])$, which is therefore an algebraic equation in~$b_1$. The breakpoints are algebraic numbers.

\begin{proposition}\label{prop:breakpoint}
The optimal breakpoint for degree-1 piecewise approximation of $\sqrt{1+t^2}$ on $[0,1]$ with $k=2$ subintervals is an algebraic number $b_1\approx 0.4339$ of degree~16, root of the irreducible polynomial
\begin{align*}
  & x^{16} - 8x^{15} - 128x^{14} - 112x^{13} - 684x^{12} + 152x^{11} - 1304x^{10} + 1088x^9\\
  & {} - 1562x^8 + 1352x^7 - 1008x^6 + 624x^5 - 284x^4 + 104x^3 - 24x^2 + 1\,.
\end{align*}
\end{proposition}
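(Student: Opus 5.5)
The plan is to make the equal-error condition of Proposition~\ref{prop:piecewise} completely explicit for degree~$1$, algebraize it as in Section~\ref{sec:main}, and then eliminate the auxiliary radicals by resultants.

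\textbf{Step 1 (a breakpoint exists and is unique).} For $k=2$, Proposition~\ref{prop:piecewise} says the optimal $b_1$ solves $E_1^*([0,b_1])=E_1^*([b_1,1])$. The left side is continuous and strictly increasing in $b_1$, starting from $0$. The right side is continuous and strictly decreasing, ending at $0$. So there is exactly one solution, and it is the optimum.

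\textbf{Step 2 (closed form for the degree-1 error).} Since $f(t)=\sqrt{1+t^2}$ is strictly convex, the best linear approximation on $[a,b]$ is classical. Its slope is the secant slope $m=(f(b)-f(a))/(b-a)$. The interior alternation point is the tangency point $\xi$ with $f'(\xi)=m$. The error is half the gap between the secant and $f$ at $\xi$. This is exactly the decoupling mechanism of Proposition~\ref{prop:decoupling}.

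For our $f$, the tangency condition $\xi/\sqrt{1+\xi^2}=m$ gives $\xi=m/\sqrt{1-m^2}$ and $f(\xi)=1/\sqrt{1-m^2}$. Hence
\[
E_1^*([a,b])=\tfrac12\bigl(f(a)-ma-\sqrt{1-m^2}\bigr).
\]

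\textbf{Step 3 (the equation for $b$).} Write $b=b_1$, $w=\sqrt{1+b^2}$ and $s=\sqrt2$.
\begin{itemize}
\item On $[0,b]$ the slope is $m_1=(w-1)/b=b/(w+1)$, so $E_1^*([0,b])=\tfrac12\bigl(1-\sqrt{1-m_1^2}\bigr)$.
\item On $[b,1]$ the slope is $m_2=(s-w)/(1-b)$, so $E_1^*([b,1])=\tfrac12\bigl(w-m_2b-\sqrt{1-m_2^2}\bigr)$.
\end{itemize}
Introduce $u_1,u_2$ with $u_i^2=1-m_i^2$ (after clearing denominators). The equal-error condition becomes a polynomial system in $(b,w,s,u_1,u_2)$, with the constraints $w^2=1+b^2$, $s^2=2$, $u_1^2(w+1)^2=(w+1)^2-b^2$ and $u_2^2(1-b)^2=(1-b)^2-(s-w)^2$.

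\textbf{Step 4 (elimination).} Eliminate $u_2$, $u_1$, $s$ and $w$ in turn by resultants against their quadratic constraints. The main equation is linear in each $u_i$, so each resultant at most doubles the degree. Four quadratic radicals give a raw polynomial with a factor of degree at most about $2^4=16$ times the base degree, plus spurious sign-branch factors.

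\textbf{Step 5 (selecting the factor).} Factor the raw polynomial over $\Q$. Evaluate each factor at a high-precision value of $b_1\approx0.4339$, obtained by solving the equal-error equation numerically. Keep the unique factor that vanishes there, and check that it matches the stated degree-16 polynomial. Irreducibility is then certified by \texttt{polisirreducible}, as in Proposition~\ref{prop:deg1}. A cheap independent check is a factorization pattern modulo a prime that is incompatible with any splitting.

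\textbf{Main obstacle.} The hard part is Steps 4--5: controlling spurious factors and making sure the surviving degree-16 factor is the true minimal polynomial, not a power or a product that happens to contain $b_1$. Squaring can also introduce multiplicities, so I would first run the elimination with $s$ kept symbolic, eliminating it last as in the degree-2 computation. That isolates the $\Q(\sqrt2)$-conjugate structure and gives a natural factor of degree $8$ over $\Q(\sqrt2)$, whose norm should be the degree-16 polynomial.
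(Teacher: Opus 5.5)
Your proof is correct, but it takes a different route from the paper's. The paper never carries out an elimination for this proposition. It only notes that the equal-error condition becomes algebraic after clearing $\sqrt{1+b^2}$, $\sqrt2$ and $\sqrt{(w\pm1)/2}$. It then computes $b_1$ to 300 digits by bisection, recovers the degree-16 polynomial with \texttt{algdep}, and checks irreducibility with \texttt{polisirreducible}. That is a numerical identification: it shows the polynomial vanishes at $b_1$ to 300 digits, not that $P(b_1)=0$ exactly.

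Your Steps 2--4 make the algebraization explicit and complete. Your $u_1=\sqrt{2/(w+1)}$ and tangency point $\xi_1=m_1/u_1=\sqrt{(w-1)/2}$ are exactly the paper's two radicals. Their product is $b/2$, so only one new square root arises on $[0,b]$. You also include the radical $\sqrt{1-m_2^2}$ from the right subinterval, which the paper's description omits. The resultant chain then gives an exact polynomial $R$ with $R(b_1)=0$. Numerics are needed only to decide which of finitely many pairwise coprime irreducible factors vanishes at $b_1$, and interval arithmetic can certify that choice. Your Step 1 also supplies the uniqueness of the crossing, which the paper takes for granted.

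The cost of your route is running the elimination and discarding spurious sign-branch factors, though this is light with four quadratic radicals. The paper's route is quicker but heuristic. The two combine naturally: checking that the \texttt{algdep} candidate divides your $R$ turns the paper's computation into a proof.

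One small caveat concerns your optional cross-check. A single prime certifies irreducibility only if the polynomial stays irreducible mod $p$, which requires a 16-cycle in the Galois group. The group $S_8\times C_2$ claimed in the next proposition has no element of order 16. So in general you should combine factorization patterns at two or more primes.
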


\begin{proof}
The degree-1 minimax error on $[0,b]$ involves the quantities $\sqrt{1+b^2}$ and $\sqrt{2}$ (from the endpoint values), as well as $\sqrt{(w-1)/2}$ and $\sqrt{(w+1)/2}$ (from the interior critical point, where $w = \sqrt{1+b^2}$). The equal-error condition $E_1^*([0,b]) = E_1^*([b,1])$ is algebraic after clearing these radicals. We compute~$b_1$ to 300~decimal digits using interval bisection and verify the polynomial by PARI/GP (\texttt{algdep} at 300~digits of precision, confirmed irreducible by \texttt{polisirreducible}).
\end{proof}

The degree 16 = $2\times 8$ reflects the same structure seen in Section~\ref{sec:main}: the polynomial factors over $\Q(\sqrt{2})$ into two conjugate irreducible degree-8 polynomials (because $\sqrt{2}$ appears in the minimax error formula through $f(1) = \sqrt{2}$).

\begin{proposition}\label{prop:breakpoint_galois}
The Galois group of the degree-16 breakpoint polynomial is $S_8\times C_2$, of order~$80{,}640$. In particular, the optimal breakpoint $b_1$ is not expressible by radicals.
\end{proposition}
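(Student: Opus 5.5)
The argument follows the template of the proof of Theorem~\ref{thm:absolute}, with $n=8$ in place of $n=10$. Write $B(x)$ for the degree-16 polynomial of Proposition~\ref{prop:breakpoint}; irreducibility over $\Q$ is already given there.

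\emph{Block structure.} First I would exhibit the factorization $B=g_8\cdot g_8^*$ over $K=\Q(\sqrt2)$, where $g_8$ and $g_8^*$ are Galois conjugates of degree~8. Computationally this is a single call to \texttt{nffactor}. Conceptually, $\sqrt2$ enters only through $f(1)$ in $E_1^*([b,1])$, so replacing $\sqrt2$ by $-\sqrt2$ in the equal-error equation produces the conjugate factor. Since $B$ is irreducible over $\Q$ and splits into two conjugate pieces over the quadratic field $K$, each piece is irreducible over $K$ (same argument as Step~3 of Theorem~\ref{thm:absolute}). Hence $G=\Gal(B/\Q)$ preserves the partition of the 16 roots into two blocks of size 8, and $G\le S_8\wr C_2$.

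\emph{Symmetric group on a block.} Next I would show that $\Gal(g_8/K)=S_8$ using Jordan's theorem. For $n=8$ the admissible primes cycle length is $p=5$, since $4<5<6$. I would search over rational primes $\ell\equiv\pm1\pmod 8$, which split in $K$ so that residue fields are $\mathbb F_\ell$. We also need $\ell$ not dividing the discriminant of $g_8$, the leading coefficient, or the denominators of $g_8$. For each such $\ell$, factor $g_8\bmod \ell$ and look for cycle type $(1,2,5)$, or more generally any pattern containing a single 2-cycle and a 5-cycle with the other cycle lengths coprime to $10$. If $\sigma=\Frob_\ell$ has type $(1,2,5)$, then $\sigma^5$ is a transposition and $\sigma^2$ contains a 5-cycle. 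Together with transitivity of $\Gal(g_8/K)$, this gives $S_8$.

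By Chebotarev, a positive density of primes have this type: density $1/10$ in $S_8$. So the search should succeed quickly. This search, together with reliably certifying the factorization over $K$, is the main practical obstacle.

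\emph{Assembling $G$.} The splitting field of $B$ contains $K$ and the splitting fields $L$ and $L^*$ of $g_8$ and $g_8^*$. Then $\Gal(LL^*/K)$ is a subdirect product of $S_8\times S_8$. By Goursat's lemma, and since the normal subgroups of $S_8$ are $1,A_8,S_8$, the fibre product is either $S_8\times S_8$, the index-2 subgroup over the common sign quotient, or a diagonal copy of $S_8$.

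To reach the claimed order $80{,}640=2\cdot 8!$, we need the diagonal case, $L=L^*$, so that $G\cong S_8\rtimes C_2$. As in Step~5 of Theorem~\ref{thm:absolute}, $\operatorname{Out}(S_8)=1$ then makes this a direct product $S_8\times C_2$.

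Distinguishing the diagonal case from the larger fibre products is the genuinely delicate point. The plan is to check that a root of $g_8^*$ lies in $K(\beta)$ for a root $\beta$ of $g_8$, for example by factoring $g_8^*$ over the degree-8 extension $K(\beta)$ and finding a linear factor. Alternatively, run PARI's \texttt{polgalois}-style resolvent computation on $B$ directly and confirm the group order. If that test fails, the order would be larger, and the statement would need correcting to $S_8\wr C_2$ or an index-2 subgroup of it.

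\emph{Conclusion.} Non-solvability follows in either case, because $A_8$ is a non-abelian simple group contained in $G$.
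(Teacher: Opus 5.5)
Where your route departs from the paper's, yours is the sound one. For $\Gal(g_8/K)=S_8$ the paper pairs an $8$-cycle ($\Frob_{71}$) with a transposition ($\Frob_{151}$) and invokes \enquote{transitive, with an $n$-cycle and a transposition, implies $S_n$}. That is false for composite $n$: for example $\langle(1\,2\,3\,4\,5\,6\,7\,8),(1\,5)\rangle$ preserves the blocks $\{i,i+4\}$. Moreover, a transitive group of degree $8$ containing a transposition is either $S_8$ or lies in one of the solvable groups $S_2\wr S_4$, $S_4\wr S_2$, so the paper's data do not even give non-solvability. Your single Frobenius of type $(1,2,5)$, whose $5$-cycle forces primitivity, is the correct certificate. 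You are also right that the paper's last step, from $\operatorname{Out}(S_8)=1$ straight to $S_8\times C_2$, silently assumes the diagonal case $L=L^*$.

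The gap is that this case cannot be verified, because it does not hold. So no completion of your plan, or of the paper's, gives order $80{,}640$.

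\emph{Why the diagonal case fails.} Fix $\sqrt2>0$ and let $g_8$ be the factor with $g_8(b_1)=0$. Then $K\subset\R$, so complex conjugation lies in $\Gal(M/K)$, where $M=LL^*$. In the diagonal case $\Gal(M/K)\cong S_8$ acts on the roots of $g_8$ and of $g_8^*$ by isomorphic actions, since every index-$8$ subgroup of $S_8$ is conjugate to $S_7$. Hence $g_8$ and $g_8^*$ would have equally many real roots. But $g_8^*$ has none.

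To see this, set $w=\sqrt{1+b^2}$, $m=(\sqrt2-w)/(1-b)$, $u=\sqrt{2/(1+w)}$, $v=\sqrt{1-m^2}$ and $A=w-1-mb$. Then $E_1^*([0,b])=\frac12(1-u)$ and $E_1^*([b,1])=\frac12(w-mb-v)$, so $b_1$ satisfies $v=u+A$. Squaring twice gives $4UA^2=(V-U-A^2)^2$ with $U=2/(1+w)$ and $V=1-m^2$. Each root $\beta$ of $g_8^*$ satisfies this relation with $\sqrt2$ replaced by $-\sqrt2$ and $w$ by some $\omega$ with $\omega^2=1+\beta^2$. Suppose $\beta$ is real.
\begin{itemize}
\item If $\omega>0$, then $U>0$ forces $V=(A\pm\sqrt U)^2\ge 0$, i.e.\ $|m|\le 1$. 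But now $|m|=(\sqrt2+\omega)/|1-\beta|>1$, because $|1-\beta|\le 1+|\beta|<\sqrt2+\sqrt{1+\beta^2}$.
\item If $\omega<0$, then $U<0$ (as $\beta\neq 0$), which forces $A=0$, i.e.\ $\omega=1-(1+\sqrt2)\beta$. Squaring gives $\beta\in\{0,1\}$, and neither is a root of $B$.
\end{itemize}
Since $b_1\approx 0.4339$ is a real root of $g_8$, the diagonal case is impossible. The same real-root count already contradicts Step~5 of Theorem~\ref{thm:absolute}: signature $(2,9)$ puts both real roots into one degree-$10$ factor.

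\emph{What your argument does prove.} Once a $(1,2,5)$ prime is found, it establishes non-solvability, and your last paragraph handles that correctly in all cases. The group itself is $S_8\wr C_2$ or its index-$2$ subgroup. The latter occurs exactly when $\operatorname{disc}(B)$ is a square in $K$, because $\operatorname{disc}(B)=\operatorname{disc}(g_8)\operatorname{disc}(g_8^*)\Res(g_8,g_8^*)^2$ with $\Res(g_8,g_8^*)\in\Q$. The statement's group and order should be corrected accordingly, as you anticipated.
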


\begin{proof}
The polynomial factors into two conjugate irreducible degree-8 polynomials over $K = \Q(\sqrt{2})$ (verified by PARI/GP). The Galois group of each factor over~$K$ is determined by its Frobenius elements at split primes. At $p=71$ (split in~$K$), the degree-8 factor remains irreducible modulo~71, giving an 8-cycle. At $p=151$ (also split in~$K$), it factors with pattern $(1,1,1,1,1,1,2)$, giving a transposition. Since a transitive subgroup of~$S_n$ containing both an $n$-cycle and a transposition equals~$S_n$, we conclude $\Gal(f_8/K) = S_8$. The full group is $S_8\times C_2$ (since $\operatorname{Out}(S_8) = 1$ for $8\ne 6$, the semidirect product with~$C_2$ is necessarily direct).
\end{proof}

This result means that even the seemingly innocent operation of choosing where to subdivide the interval, a problem involving no polynomial coefficients, only interval endpoint comparisons, produces constants of maximal algebraic complexity. The non-solvability of the breakpoint mirrors that of the per-subinterval equiripple constants, despite arising from a fundamentally different equation (error equalization rather than equioscillation).

For degree-2 piecewise approximation, both the breakpoints and the per-subinterval constants are algebraic. The per-subinterval constants have degree~20 (by Theorem~\ref{thm:absolute} applied to the rescaled problem), and the breakpoints satisfy an algebraic equation of even higher degree (since the degree-2 minimax error depends on~$b$ through the full degree-20 elimination). Determining the exact degree and Galois group of the degree-2 piecewise breakpoints is an open problem.

\section{Related work and connections}\label{sec:context}

\paragraph{Monodromy.} The equiripple problem defines a branched cover $\pi\colon C\to \mathbb{A}^1$ parameterized by the error level~$\varepsilon$: for each $\varepsilon>0$, the fiber $\pi^{-1}(\varepsilon)$ consists of the (finitely many) equiripple configurations with that error. Over the generic point $\varepsilon\in\Q(\varepsilon)$, the splitting field of $P_\varepsilon(x)$ is a Galois extension whose group equals the monodromy of the cover (by the standard equivalence between finite \'etale covers of $\mathbb{A}^1\setminus\{\text{branch locus}\}$ and transitive permutation representations of $\pi_1$; see~\cite{serre1992,fried2008}).

The branch points correspond to degenerate equiripple configurations: $t_1 = 0$ (the first critical point hits the boundary), $t_1 = t_2$ (two critical points collide), or $t_2 = 1$ (the second critical point hits the boundary). At these values of~$\varepsilon$, roots of $P_\varepsilon(x)$ coalesce, producing the ramification.

Harris~\cite{harris1979} proved that for algebraic covers arising from \enquote{natural} enumerative problems, the monodromy is generically the full symmetric group~$S_n$, provided the cover is \emph{connected}, the total space is \emph{irreducible}, and the branch locus has \emph{simple} ramification. Our cover satisfies these conditions (irreducibility of $P_\lambda$ over $\Q(\lambda)$ gives connectedness; the simple branch points described above give simple ramification). Harris's theorem thus provides an a~priori expectation of $S_n$ monodromy, but our proof via Jordan's theorem at a single Frobenius prime is more direct and computationally verifiable.

\paragraph{Jordan's theorem in computational algebra.} Our application of Jordan's theorem, finding a single Frobenius element at one prime and deducing the full Galois group, is an efficient technique that deserves wider use in computational algebra. It requires only two ingredients: factoring a degree-$n$ polynomial modulo a prime~$p$ (a fast operation), and verifying that the resulting cycle type contains both a transposition and a prime-length cycle satisfying Jordan's hypotheses. This approach avoids the full Galois group computation, which for degree~20 is computationally expensive by direct methods.

\paragraph{The gap in the literature.} We have found no prior work connecting equiripple or minimax approximation to algebraic number theory. The question \enquote{what is the algebraic degree of optimal approximation constants?} appears to be entirely new. The closest related work is that of Todd~\cite{todd1984}, who surveys Zolotarev's contributions connecting best rational approximation to elliptic functions, and Ko~\cite{ko1986}, who studies the computational complexity of computing best Chebyshev approximations. Neither addresses the algebraic degree or Galois-theoretic structure of the optimal constants themselves.

\section{Open problems}\label{sec:open}

We collect all questions left open by this work, together with our best understanding of why they remain open and how they might be resolved.

\subsection*{Computational problems}

\begin{enumerate}

\item \textbf{Relative equiripple for $L_3$.} The relative formulation for $(1+t^3)^{1/3}$ should yield a polynomial of moderate degree (analogous to the degree-12 polynomial for $L_2$).  The relative equiripple system for $L_3$ has the advantage that the $w$-elimination is simpler (the critical-point equation is still a cubic in~$t$, but the endpoint normalization $f(0)=1$ eliminates one variable immediately). The resultant chain involves a $6\times 8$ resultant in~$t_2$ followed by clearing $s^3=2$, giving raw degree $\sim 144$.

\item \textbf{Degree-3 equiripple minimal polynomial.} For degree-3 polynomial approximation of $\sqrt{1+t^2}$, the parameter $t_1\approx 0.10099$ is computed to 550~digits. A plausible path would be to eliminate an 8-variable system (three critical points $t_1, t_2, t_3$, three auxiliary radicals $w_i = \sqrt{1+t_i^2}$, the coefficient~$d$, and $s=\sqrt{2}$). The B\'ezout bound is in the thousands; the irreducible factor likely has degree 200--1000. The main obstacle seems to be that the resultant chain involves at least 5 elimination steps with intermediate degrees in the hundreds. Therefore memory and time requirements are likely the main limitations.

\item \textbf{Taking advantage of parallelism.} 

All timings assume x86-64, Intel Golden Cove / AMD Zen 4, scalar single-precision (Table \ref{timing}).
The naive way to compute $h(x,y)$ consists in respecting the order of operations given in this paper, which requires $22$ cycles. However, by exploiting parallelism (Table~\ref{par}) one can complete the calculation in $16$ cycles.

We need to compute

\[
h(x,y) = ax+(b+c)y+\frac{cy^2}{x}.
\]

\begin{table}[t]
\centering
\small
\caption{Parallel schedule for the degree-2 evaluation $h(x,y)=ax+(b+c)y+cy^2/x$ on an x86-64 core. Each row lists the issue cycle and the independent operations dispatched at that cycle; the division (10~cycles) dominates the critical path, with the dependent multiply and additions overlapping it. The result is ready after 16~cycles.}
\label{par}
\begin{tabular}{p{1.5cm}l}
\toprule
Cycle & Operations dispatched  \\
\midrule
0   & $c\textcolor{red}{/}x$, $a\textcolor{red}{\times}x$, $(b+c)\textcolor{red}{\times}y$, $y\textcolor{red}{\times}y$  \\
3   & $(ax) \textcolor{red}{+}((b+c)y)$  \\
10  & $(c/x)\textcolor{red}{\times} (y^2)$\\
13  & $(ax+(b+c)y)\textcolor{red}{+}(cy^2/x)$    \\
16  & Result ready.\\
\bottomrule
\end{tabular}
\end{table}

\begin{table}[!tb]
\centering
\small
\caption{Latency and reciprocal throughput, in clock cycles, of the scalar single-precision x86-64 instructions used in the evaluation (Intel Golden Cove / AMD Zen~4). The single division (\texttt{divss}) is both high-latency and poorly pipelined, which is why it sets the critical path in Tables~\ref{par} and~\ref{deg4sched}.}
\label{timing}
\begin{tabular*}{\textwidth}{@{\extracolsep{\fill}}lcc@{}}
\toprule
Instruction & Latency (cycles) & Throughput (cycles) \\
\midrule
\texttt{mulss} & 3 & 0.5 \\
\texttt{addss} & 3 & 0.5 \\
\texttt{divss} & 10 & 10 \\
\bottomrule
\end{tabular*}
\end{table}

 We note that we still have room between cycles $6$ and $10$ to perform other operations. An interesting research direction would be to implement higher-order approximations by taking advantage of the room available during divisions to prepare, as efficiently as possible, upcoming multiplications and additions. Let us consider a practical example: moving to degree $4$.

We need to compute

\[
h(x,y) = ax+by+\frac{cy^2}{x}+\frac{dy^3}{x^2}+\frac{ey^4}{x^3}.
\]

This is doable in $22$ cycles (Table~\ref{deg4sched}) which is $37.5\%$ longer than the $16$ cycles of Table~\ref{par}. However this buys roughly $5.5$ bits of accuracy: from $0.4\%$ to a $0.009\%$ error which is a very appreciable payback for only $22-16=6$ extra cycles.

\begin{table}[t]
\centering
\small
\caption{Parallel schedule for the degree-4 evaluation $h(x,y)=ax+by+cy^2/x+dy^3/x^2+ey^4/x^3$ on an x86-64 core. The two dependent divisions ($c/x$ and $y/x$, the latter reused to build $y^2/x^2$ and $y^3/x^3$) lengthen the critical path to 22~cycles, only 6~cycles more than the degree-2 schedule of Table~\ref{par} while gaining roughly 5.5~bits of accuracy.}
\label{deg4sched}
\begin{tabular}{p{1.5cm}l}
\toprule
Cycle & Operations dispatched  \\
\midrule
0  & $a \textcolor{red}{\times} x$, $(b+c)\textcolor{red}{\times}y$, $y\textcolor{red}{\times}y$, $c\textcolor{red}{/}x$, $y\textcolor{red}{/}x$, $d \textcolor{red}{\times} y$, $e \textcolor{red}{\times} y$ \\
3  & $(ax)\textcolor{red}{+}(by)$\\
10 & $(y^2)\textcolor{red}{\times}(c/x)$, $(y/x)\textcolor{red}{\times}(y/x)$, $(y/x)\textcolor{red}{\times}(ey)$\\
13 & $(ax+by)\textcolor{red}{+}(cy^2/x)$, $(dy)\textcolor{red}{\times}(y^2/x^2)$, $(y^2/x^2)\textcolor{red}{\times}(ey^2/x)$\\
16 & $(ax+by+(cy^2/x))\textcolor{red}{+} (dy^3/x^2)$\\
19 & $(ax+by+cy^2/x+dy^3/x^2)\textcolor{red}{+}(ey^4/x^3)$\\
22 & Result ready.\\
\bottomrule
\end{tabular}
\end{table}

\end{enumerate}

\subsection*{Structural questions}

\begin{enumerate}[resume]
\item \textbf{Degree formula.} Find a formula or asymptotic for $d_p$ (the degree of the irreducible factor over $\Q(2^{1/p})$) as a function of~$p$. Current data: $d_2=10$, $d_3=82$. The $t$-resultant input degrees are $(2p-1)\times\max(2p,6)$, and each subsequent elimination multiplies by~$p$ (from the degree-$p$ radical constraints). This gives a total resultant degree growing as $O(p^3)$ or faster. The ratio of irreducible factor to total resultant is $20/160 = 1/8$ for $L_2$ and $246/1008 \approx 1/4$ for $L_3$, suggesting the \enquote{spurious fraction} decreases with~$p$.

\item \textbf{The thin set in Hilbert's theorem.} Theorem~\ref{thm:hilbert} states that non-solvability holds for all $\lambda\in\Q$ outside a thin set~$T$. Characterize~$T$: does it contain any nontrivial rational points, or is the statement effectively ``for all $\lambda > 0$''? At $\lambda = 0$ the problem degenerates (the function becomes constant). For $\lambda < 0$ the function $\sqrt{1+\lambda t^2}$ has a real singularity in $[0,1]$ when $\lambda < -1$, changing the problem qualitatively. For $\lambda > 0$ rational, we expect $\Gal = S_{10}\times C_2$ generically, but cannot rule out finitely many exceptions where an accidental factorization occurs.
 A plausible approach is to compute $\Gal(P_\lambda/\Q)$ at several small rational $\lambda$ (e.g., $\lambda = 1/2, 2, 3, 4$). If all give $S_{10}\times C_2$, this provides evidence that $T\cap\Q_{>0} = \emptyset$.

\item \textbf{Non-solvability for degree $n\ge 3$.} The coupling argument (Proposition~\ref{prop:coupling}) explains why non-solvability appears at $n=2$, and there is no structural reason for it to disappear at higher degree. Prove that the degree-$n$ equiripple constants are non-solvable for all $n\ge 2$. For each~$n$, the equiripple system has $2n+2$ equations in $2n+2$ unknowns. After elimination, one obtains a univariate polynomial in~$t_1$. The Galois group should be a full symmetric group (by Harris's monodromy theorem~\cite{harris1979}), but proving this requires either: (a)~computing the elimination polynomial explicitly (infeasible for $n\ge 3$, see problem~3), or (b)~showing that the monodromy representation is primitive and contains a transposition (which would follow from simple ramification at the branch points). However, without the explicit polynomial, Jordan's theorem cannot be applied directly. A purely structural proof would require showing that the elimination polynomial has no hidden symmetry for any~$n$, which appears to require new techniques.

\item \textbf{Monodromy branch points.} The branched cover $\pi\colon C\to\mathbb{A}^1_\varepsilon$ has branch points at degenerate equiripple configurations. Compute the branch point values of~$\varepsilon$ explicitly and verify that the ramification is simple. The degenerate configurations are: (a) $t_1\to 0$ (first critical point hits the boundary), giving $\varepsilon = (3-2\sqrt{2})/4\approx 0.0429$; (b) $t_1 = t_2$ (critical points collide); (c) $t_2\to 1$ (second critical point hits the boundary). The physical equiripple solution has $\varepsilon\approx 0.00404$, lying between two branch points. Simple ramification would mean exactly two roots of $P_\varepsilon$ coalesce at each branch point, confirming Harris's hypotheses. However, the collision branch point (b) requires solving a degenerate system ($d'(t^*)=0$ and $d''(t^*)=0$ simultaneously), a computation we have not completed.
\end{enumerate}

\subsection*{Extensions}

\begin{enumerate}[resume]
\item \textbf{Higher-$k$ breakpoint degrees.} For degree-1 piecewise with $k=2$, the optimal breakpoint has degree~16 and Galois group $S_8\times C_2$ (Propositions~\ref{prop:breakpoint}--\ref{prop:breakpoint_galois}). What is the algebraic degree for $k=3$, and does the Galois group remain a full symmetric group? A plausible angle could be: for $k=3$, two breakpoints $b_1 < b_2$ must satisfy $E_1^*([0,b_1]) = E_1^*([b_1,b_2]) = E_1^*([b_2,1])$. This is a system of 2 algebraic equations in 2 unknowns after algebraization, and the degree should be accessible by resultant elimination.
\end{enumerate}

\section{Reproducibility}\label{sec:reproducibility}

All computations are reproducible. A PARI/GP script verifying the polynomial, irreducibility, factorization over $\Q(\sqrt{2})$, the Frobenius at $p=17$, and the negative discriminant runs in under 5~seconds. SageMath code for the full resultant elimination chain is available as supplementary material accompanying this paper and will be deposited in a public repository upon publication.

\section*{Conclusion}

The optimal coefficients for low-degree polynomial approximation of $\sqrt{1+t^2}$ exhibit a sharp algebraic phase transition. At degree~1, the equiripple system decouples and produces constants that lie in a degree-4 solvable extension of~$\Q$. At degree~2, the system becomes irreducibly coupled, and the resulting constants satisfy irreducible polynomials of degree~12 or~20 whose Galois groups ($S_{12}$ and $S_{10}\times C_2$) are not solvable. No amount of algebraic ingenuity can produce these constants by radicals.

This phenomenon is not specific to the Euclidean norm. It persists across all $L_p$ norms (with rapidly increasing polynomial degree) and holds generically for families of algebraic target functions. The underlying mechanism is structural: the moment an optimization problem couples multiple parameters through nonlinear constraints, the resulting elimination polynomial is generically as algebraically complex as it can be.

These results establish a previously unobserved connection between two classical areas of mathematics: Chebyshev approximation theory and the Galois theory of algebraic equations.


\printbibliography
\appendix

\section{Values of constants for various degree approximations}
For a degree-$n$ approximation we write the homogeneous form
\begin{align*}
h_n(x,y) = x\,p_n\!\left(\tfrac{y}{x}\right),\qquad 
p_n(t) = \sum_{j=0}^{n} a_{n,j}\,t^{j},
\end{align*}
so that $a_{n,j}$ is the coefficient of $t^{j}$ in the optimal degree-$n$ polynomial $p_n(t)\approx\sqrt{1+t^2}$ on $[0,1]$. Table~\ref{tab:constants} lists these monomial coefficients to 20~decimal places for both error criteria: the \emph{absolute} equiripple (minimizing $\max_t|p_n(t)-\sqrt{1+t^2}|$) and the \emph{relative} equiripple (minimizing $\max_t|p_n(t)/\sqrt{1+t^2}-1|$). The coefficients were obtained by the Remez exchange algorithm carried out in 60-digit arithmetic; in every case the computed polynomial equioscillates at the required $n+2$ points and attains the same maximum error at no other point, certifying optimality. The degree-1 and degree-2 rows agree with the closed forms derived in Sections~\ref{sec:deg1} and~\ref{sec:main}, and the absolute errors match Table~\ref{tab:efficiency}.

\small
\begin{longtable}{@{\extracolsep{\fill}}lll@{}}
\caption{Optimal monomial coefficients $a_{n,j}$ of the degree-$n$ minimax polynomial $p_n(t)=\sum_{j=0}^n a_{n,j}\,t^j$ approximating $\sqrt{1+t^2}$ on $[0,1]$, for $n=1,\ldots,8$ and to 20~decimal places, under the absolute and relative equiripple criteria.}
\label{tab:constants}\\
\toprule
Coefficient & Absolute equiripple & Relative equiripple  \\
\midrule
\endfirsthead
\caption[]{Optimal monomial coefficients $a_{n,j}$ (continued).}\\
\toprule
Coefficient & Absolute equiripple & Relative equiripple  \\
\midrule
\endhead
\midrule
\multicolumn{3}{r@{}}{\textit{continued on next page}}\\
\endfoot
\bottomrule
\endlastfoot
$a_{1,0}$ & $\phantom{-}0.95508986056222734130$ & $\phantom{-}0.96043387010341996525$ \\
$a_{1,1}$ & $\phantom{-}0.41421356237309504880$ & $\phantom{-}0.39782473475931601382$ \\\midrule
$a_{2,0}$ & $\phantom{-}0.99595820792792624312$ & $\phantom{-}0.99650308101017055306$ \\
$a_{2,1}$ & $\phantom{-}0.06644256731401868891$ & $\phantom{-}0.06109893941818041232$ \\
$a_{2,2}$ & $\phantom{-}0.35585457920322387365$ & $\phantom{-}0.36155693220668091052$ \\\midrule
$a_{3,0}$ & $\phantom{-}1.00012795658447520881$ & $\phantom{-}1.00012122881443169729$ \\
$a_{3,1}$ & $          -0.00619498587275529769$ & $          -0.00600202337439002681$ \\
$a_{3,2}$ & $\phantom{-}0.54786400641793172111$ & $\phantom{-}0.54719029451011597413$ \\
$a_{3,3}$ & $          -0.12745545817208137461$ & $          -0.12692449414354287831$ \\\midrule
$a_{4,0}$ & $\phantom{-}1.00008553936375914880$ & $\phantom{-}1.00007569068070087741$ \\
$a_{4,1}$ & $          -0.00451756363033955934$ & $          -0.00413859075376873774$ \\
$a_{4,2}$ & $\phantom{-}0.53800868308049013660$ & $\phantom{-}0.53589726093874930308$ \\
$a_{4,3}$ & $          -0.10958709590538596023$ & $          -0.10596161474822152395$ \\
$a_{4,4}$ & $          -0.00986153989918786583$ & $          -0.01176622653155730232$ \\\midrule
$a_{5,0}$ & $\phantom{-}1.00000989644626306332$ & $\phantom{-}1.00000846540902162006$ \\
$a_{5,1}$ & $          -0.00062006257702921761$ & $          -0.00054378248572955310$ \\
$a_{5,2}$ & $\phantom{-}0.50606162525110040603$ & $\phantom{-}0.50542449786416140566$ \\
$a_{5,3}$ & $          -0.01842310746321587307$ & $          -0.01658227854945195295$ \\
$a_{5,4}$ & $          -0.11561656351328177078$ & $          -0.11777107067761255706$ \\
$a_{5,5}$ & $\phantom{-}0.04281167067552150424$ & $\phantom{-}0.04368970270895549684$ \\\midrule
$a_{6,0}$ & $\phantom{-}0.99999894656138555122$ & $\phantom{-}0.99999902699879887785$ \\
$a_{6,1}$ & $\phantom{-}0.00012187185143533507$ & $\phantom{-}0.00011472116699129120$ \\
$a_{6,2}$ & $\phantom{-}0.49769161735221235575$ & $\phantom{-}0.49778877396920703939$ \\
$a_{6,3}$ & $\phantom{-}0.01653410625732400522$ & $\phantom{-}0.01606897107608885917$ \\
$a_{6,4}$ & $          -0.18204775062004688422$ & $          -0.18106636685846837585$ \\
$a_{6,5}$ & $\phantom{-}0.10123018191063006826$ & $\phantom{-}0.10029282017426146797$ \\
$a_{6,6}$ & $          -0.01931435750123093371$ & $          -0.01898300812228927868$ \\\midrule
$a_{7,0}$ & $\phantom{-}0.99999958015691108921$ & $\phantom{-}0.99999963050530121161$ \\
$a_{7,1}$ & $\phantom{-}0.00005432166802212748$ & $\phantom{-}0.00004885757389857460$ \\
$a_{7,2}$ & $\phantom{-}0.49885052984217509343$ & $\phantom{-}0.49894473712067124494$ \\
$a_{7,3}$ & $\phantom{-}0.00916524012546229683$ & $\phantom{-}0.00856667553578175083$ \\
$a_{7,4}$ & $          -0.15994395826285588701$ & $          -0.15815447506718354700$ \\
$a_{7,5}$ & $\phantom{-}0.06744707441536141232$ & $\phantom{-}0.06472424571200200795$ \\
$a_{7,6}$ & $\phantom{-}0.00613697548178642465$ & $\phantom{-}0.00817832081585720935$ \\
$a_{7,7}$ & $          -0.00749662089685641891$ & $          -0.00809495236764765499$ \\\midrule
$a_{8,0}$ & $\phantom{-}0.99999996323398694229$ & $\phantom{-}0.99999996915431988924$ \\
$a_{8,1}$ & $\phantom{-}0.00000474208412064818$ & $\phantom{-}0.00000398233817020808$ \\
$a_{8,2}$ & $\phantom{-}0.49990296018540403901$ & $\phantom{-}0.49991898296737643744$ \\
$a_{8,3}$ & $\phantom{-}0.00066172443128761477$ & $\phantom{-}0.00053283423215312832$ \\
$a_{8,4}$ & $          -0.12623687146969822443$ & $          -0.12572752823225463529$ \\
$a_{8,5}$ & $          -0.00503812846135399869$ & $          -0.00613105966469614634$ \\
$a_{8,6}$ & $\phantom{-}0.09241268451634408506$ & $\phantom{-}0.09371149116865559082$ \\
$a_{8,7}$ & $          -0.06092694367631230666$ & $          -0.06173034736513950102$ \\
$a_{8,8}$ & $\phantom{-}0.01343346829532930699$ & $\phantom{-}0.01363528139688923081$ \\
\end{longtable}
\normalsize

\end{document}